\title{The Akhiezer iteration}
\author{Cade Ballew}
\address{University of Washington, Seattle, WA}
\email{ballew@uw.edu}
\author{Thomas Trogdon}
\address{University of Washington, Seattle, WA}
\email{trogdon@uw.edu}
\date{}
\begin{document}

\maketitle

\begin{abstract}
We develop the Akhiezer iteration, a generalization of the classical Chebyshev iteration, for the inner product-free, iterative solution of indefinite linear systems using orthogonal polynomials for measures supported on multiple, disjoint intervals. The iteration applies to shifted linear solves and can then be used for efficient matrix function approximation.  Using the asymptotics of orthogonal polynomials, error bounds are provided. A key component in the efficiency of the method is the ability to compute the first $k$ orthogonal polynomial recurrence coefficients and the first $k$ weighted Stieltjes transforms of these orthogonal polynomials in $\OO(k)$ complexity using a numerical Riemann--Hilbert approach.  For a special class of orthogonal polynomials, the Akhiezer polynomials, the method can be sped up significantly, with the greatest speedup occurring in the two interval case where important formulae of Akhiezer are employed and the Riemann--Hilbert approach is bypassed.
\end{abstract}

\section{Introduction}
The Chebyshev iteration \cite{cheby_iter1950,gutknecht_chebyshev_2002, Manteuffel1977} is an inner product-free iterative method often used to solve symmetric, definite linear systems $\bA\bx=\bb$ where an estimate on the eigenvalues of $\bA$ is known. This iteration is based on the minimax property of the Chebyshev polynomials. More specifically, let $\hat T_k$ denote the (ortho)normalized $k$th Chebyshev polynomial shifted and scaled to an interval $[a,b]$ that contains the spectrum of $\bA$, but not zero. Then, the polynomial $r_k(x)=\frac{\hat T_k(x)}{\hat T_k(0)}$ will be small on the spectrum of $\bA$, and $q_k(x)=\frac{1-r_k(x)}{x}$ will satisfy $\bb-\bA q_k(\bA)\bb\approx0$ as well as a convenient recurrence relation.

A somewhat unconventional variant of this method arises from building a Chebyshev approximation to the function $f(x)=\frac{1}{x}$ on an interval containing the spectrum of $\bA$. Specifically, $f$ can be written in a Chebyshev-$\hat T$ series as\footnote{Here $\diamond$ refers to an independent variable, e.g., $1/\diamond$ refers to the function $x \mapsto 1/x$.}
\begin{equation*}
\frac{1}{x}=\sum_{j=0}^{\infty}\left\langle \hat T_j,\frac{1}{\diamond}\right\rangle \hat T_j(x)=\sum_{j=0}^{\infty}\left(\int_{a}^b\frac{1}{z}\frac{\hat T_j(z)\df z}{\pi\sqrt{z-a}\sqrt{b-z}} \right)\hat T_j(x),\quad \int_a^b\frac{\hat T_j(z)\hat T_k(z)}{\pi\sqrt{z-a}\sqrt{b-z}}\df z=\delta_{jk},
\end{equation*}
where $\delta_{jk}$ is the Kronecker delta. This yields a series for the matrix inverse by replacing $\hat T_j(x)$ with the matrix polynomials $\hat T_j(\bA)$. Thus, the solution to the linear system can be approximated as
\begin{equation}\label{eq:cheby_iter}
	\begin{aligned}
	\bx=\bA^{-1}\bb\approx\sum_{j=0}^{k}\left(\int_{a}^b\frac{\hat T_j(z)}{z}\frac{\df z}{\pi\sqrt{z-a}\sqrt{b-z}} \right)\hat T_j(\bA)\bb=:\bx_{k+1}.
	\end{aligned}
\end{equation}
\begin{algorithm}
	\caption{Modified Chebyshev iteration}\label{alg:cheb_iter}
	\textbf{Input: }{$\bA$, $\bb$, initial guess $\bx_0$, parameters $\alpha,c$ such that $\alpha-c=a$ and $\alpha+c=b$.}\\
	Set $\bb\gets\bb-\bA\bx_0$.\\
	Set $\bp_{0}=\bb$.\\
	Set $S_0=\frac{1}{\sqrt{\alpha-c}\sqrt{\alpha+c}}$.\\
	Set $\bx_{1}=S_0\bp_{0}$.\\
	\For{k=1,2,\ldots}{
		\uIf{k=1}{
			Set $\bp_{1}=\frac{1}{c}(\bA\bp_0-\alpha\bp_0)$.\\
		}
		\Else{
			Set $\bp_{k}=\frac{2}{c}\bA\bp_{k-1}-\frac{2\alpha}{c}\bp_{k-1}-\bp_{k-2}.$\\
		}
		Set $S_k=S_{k-1}\left(-\frac{\alpha}{c}-\sqrt{-1-\frac{\alpha}{c}}\sqrt{1-\frac{\alpha}{c}}\right)$.\\
		Set $\bx_{k+1}=\bx_k+2S_k\bp_{k}.$\\
            \If{\emph{converged}}{
                Set $\bx_{k+1}\gets\bx_{k+1}+\bx_0$.\\
	        \Return{$\bx_{k+1}$
            }
	}
 }
\end{algorithm}
This formula can be implemented as an explicit iteration, see Algorithm \ref{alg:cheb_iter}. We advocate for this modification because, as we describe in this paper, it allows one to directly generalize to general classes of orthogonal polynomials. Furthermore, the asymptotics of such polynomials give error bounds for the iteration. Derivations and comparisons of the classical Chebyshev iteration and our modification are given in Appendix \ref{ap:cheb_iter}.

The assumption that $\bA$ is symmetric and definite can be relaxed provided that the eigenvalues of $\bA$ lie sufficiently close to $[a,b]$ \cite{Manteuffel1977, manteuffel_adaptive_1978}. In fact, given an ellipse containing the spectrum of $\bA$, but not the origin, one can find an optimal interval $[a,b]$ for which the Chebyshev iteration converges (after possible rotation $\bA\to\ex^{\im\phi}\bA$); however, the iteration fails if there is no such ellipse \cite{Manteuffel1977}. 

For instance, assume that the eigenvalues of $\bA$ lie in $[a_1,b_1]\cup[a_2,b_2]$ where $a_1<b_1<0<a_2<b_2$. In this case, to use the iteration \eqref{eq:cheby_iter}, the shifted and scaled Chebyshev polynomials need to be replaced with polynomials with similar properties on multiple intervals to those of the Chebyshev polynomials on a single interval. In the classical view, one needs to replace $r_k$ with polynomials that are small on $[a_1,b_1]\cup[a_2,b_2]$ and large at the origin. de Boor and Rice \cite{de_boor_extremal_1982} explore using the minimax polynomials on $[a_1,b_1]\cup[a_2,b_2]$ for this, and they present a method to compute such polynomials. Unfortunately, such polynomials do not, in general, satisfy a three-term recurrence relation which makes generating the matrix polynomials difficult. Saad \cite{Saad} circumvents this issue by instead using polynomials that are orthogonal with respect to a weight function $w$ supported on $[a_1,b_1]\cup[a_2,b_2]$. Since such polynomials satisfy a three-term recurrence relation, the difficulty lies in computing the recurrence coefficients and the generalization of the integral terms in \eqref{eq:cheby_iter}. Using a clever construction, for this two-interval case, Saad iteratively builds the first $k$ of these quantities in $\OO(k^2)$ arithmetic operations using Chebyshev polynomials defined separately on each interval. Specifically, he computes the degree $k$ polynomial $q_k$ that minimizes a weighted $L^2$ norm $\|1-\diamond q_k(\diamond)\|_{L^2_w}$ for a chosen weight function $w$ and uses their three-term recurrence to compute $q_k(\bA)\bb$ iteratively.  In this work we, in effect, improve upon that in \cite{Saad} by taking $w(x) = x^2 v(x)$ where $v(x)$ is the weight function that Saad employs\footnote{In \cite{Saad}, $v(x)$ is found by using $h_j$ constant, $\alpha_j = \beta_j = -1$ in \eqref{rhp_weight}.}.

In \cite{part1}, we presented a numerical Riemann--Hilbert-based method for computing recurrence coefficients and Cauchy integrals for orthogonal polynomials with weight functions supported on multiple, disjoint intervals (see \cite{Trogdon2013b,TrogdonSORMT} for a method for measures supported on $\mathbb R$). This method computes the first $k$ recurrence coefficients and Cauchy integrals in $\OO(k)$ arithmetic operations. These quantities can be employed analogously to \eqref{eq:cheby_iter}, yielding a generalization of the Chebyshev iteration that improves the complexity of \cite{Saad}. 

In the current work, we discuss how a specific choice of weight function for the orthogonal polynomials allows the numerical method of \cite{part1} to be sped up significantly --- reducing the constant in the $\OO(k)$ bound.  However, this can be further improved in an even more special case. In \cite{akhiezer}, Akhiezer derived explicit formulae for the orthogonal polynomials with respect to a specific Chebyshev-like weight function supported on two disjoint intervals. We review Akhiezer's result and demonstrate how it, along with our numerical method from \cite{part1}, can be used to extend \eqref{eq:cheby_iter} to an iterative method for solving $\bA\bx=\bb$ where $\bA$ has eigenvalues on, or near, multiple, disjoint intervals. We will refer to this method as \emph{the Akhiezer iteration}. Furthermore, we discuss how the Akhiezer iteration can be extended to compute matrix functions $f(\bA)\bb$ for suitably analytic functions $f$, similar to the approach in \cite{SaadMatrixFunction}, and implemented in optimal complexity with a provably exponential convergence rate. In Figure \ref{intro_fig}, we demonstrate the convergence of the Akhiezer iteration applied to a symmetric matrix with spectrum contained in $[-3,-1]\cup[2,4]$.
\begin{figure}
\centering
\begin{subfigure}{0.495\linewidth}
	\centering
	\includegraphics[width=\linewidth]{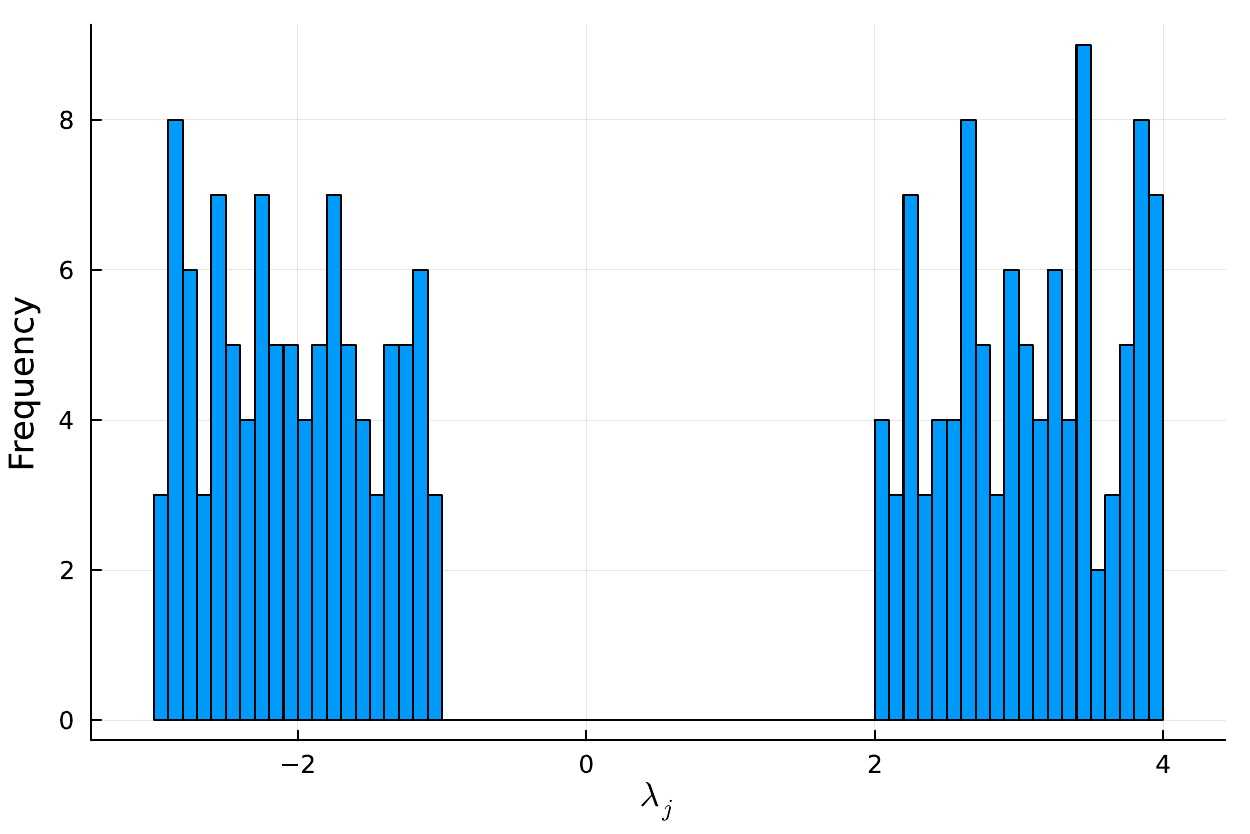}
\end{subfigure}
\begin{subfigure}{0.495\linewidth}
	\centering
	\includegraphics[width=\linewidth]{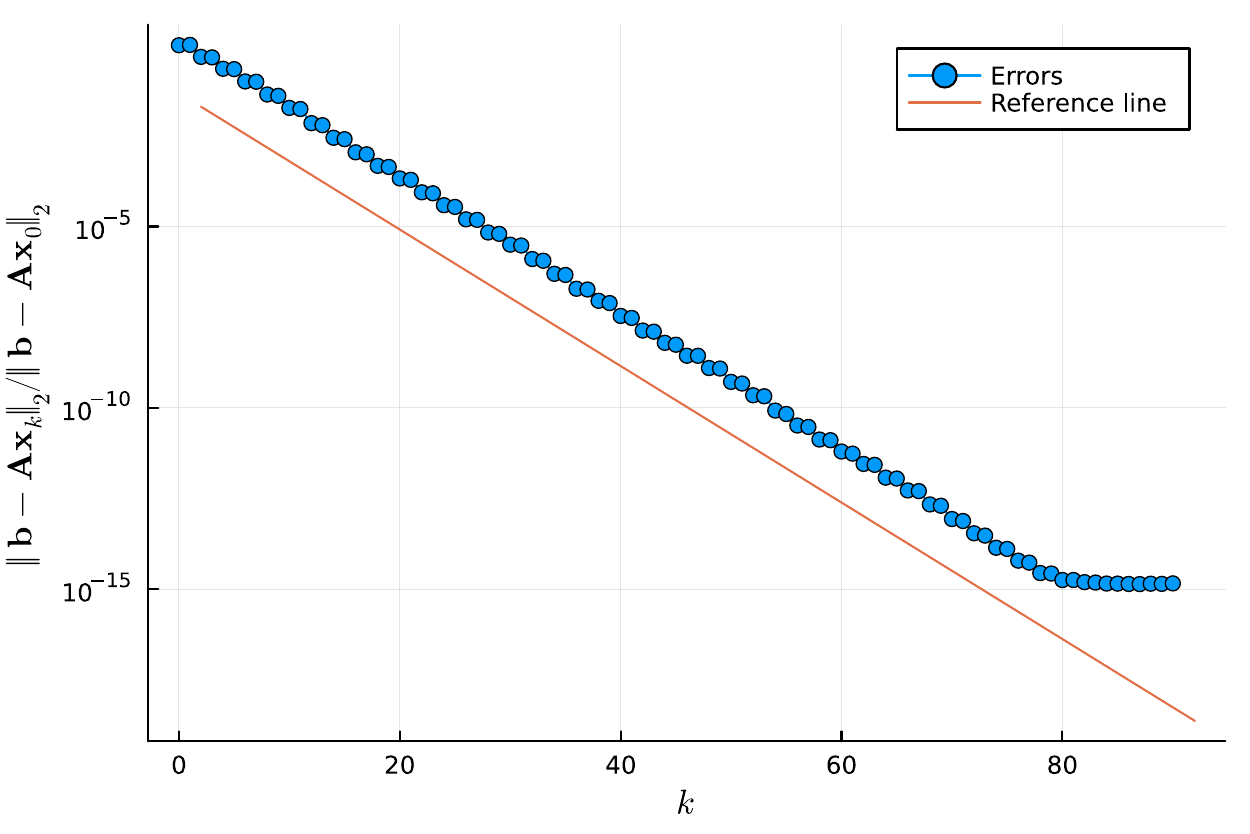}
\end{subfigure}
\caption{Histogram of the eigenvalues of $\bA\in\real^{200\times200}$ (left) and the convergence of the Akhiezer iteration applied to $\bA\bx=\bb$ where $\bb\in\real^{200}$ is a random Gaussian vector. The reference line that approximates the convergence rate is derived in Section \ref{sect:lin_conv}.}
\label{intro_fig}
\end{figure}

The layout of the paper is as follows. In Section \ref{sect:notation}, we establish notation for orthogonal polynomials and their Cauchy integrals. In Section \ref{sect:akh_polys}, we present Akhiezer's orthogonal polynomial construction and discuss how it can be used to computed recurrence coefficients and Cauchy integrals. In Section \ref{sect:akh_iter}, we present the Akhiezer iteration and its extension to matrix functions and discuss convergence rates. In Section \ref{sect:adapt}, we discuss how the Akhiezer iteration can be implemented adaptively when $\bA$ is symmetric. In Section \ref{sect:examples}, we present some examples and applications, and in Section \ref{sect:discuss}, we present our conclusions and discuss potential extensions of our work. Code used to generate the plots in this paper can be found at \cite{code_repo}.

\section{Orthogonal polynomials and their Cauchy integrals}\label{sect:notation}
\subsection{Orthogonal polynomials}
For our purposes, a weight function $w$ is a nonnegative function defined on a finite union of intervals $\Sigma$, $\Sigma\subset\real$, that is continuous and positive on the interior of $\Sigma$ such that $\int_\Sigma w(x)\df x=1$. Consider a sequence of univariate monic polynomials $\pi_0(x),\pi_1(x),\pi_2(x),\ldots$ such that $\pi_j$ has degree $j$ for all $j\in\mathbb{N}$. These polynomials are said to be orthogonal with respect to a weight function $w$ if
\[
\langle\pi_j,\pi_k\rangle_{L^2_w(\Sigma)}=h_j\delta_{jk},
\]
where $h_j>0$, $\delta_{jk}$ is the Kronecker delta, and
\begin{equation*}
	\langle g,h\rangle_{L^2_w(\Sigma)}=\int_\Sigma g(x)\overline{h(x)}w(x)\df x, \quad \|g\|_{L^2_w(\Sigma)}=\sqrt{\langle g,g\rangle_{L^2_w(\Sigma)}}.
\end{equation*}
The orthonormal polynomials $p_0(x),p_1(x),p_2(x),\ldots$ are defined by
\[
p_j(x)=\frac{1}{\sqrt{h_j}}\pi_j(x),
\]
for all $j\in\mathbb{N}$. The orthonormal polynomials satisfy a symmetric three-term recurrence
\begin{equation}\label{recurr}
	\begin{aligned}
		&xp_0(x)=a_0p_0(x)+b_0p_1(x),\\
		&xp_k(x)=b_{k-1}p_{k-1}(x)+a_kp_k(x)+b_kp_{k+1}(x),\quad k\geq1,
	\end{aligned}
\end{equation}
where $b_k>0$ for all $k$. A general reference is \cite{Szego1939}.

\subsection{Cauchy integrals}
Given a curve $\Gamma\subset\mathbb{C}$ and a function $f:\Gamma\to\mathbb{C}$, the Cauchy transform $\mathcal{C}_\Gamma$ is an operator that maps $f$ to its Cauchy integral, i.e.,
\[
\mathcal{C}_\Gamma f(z)=\frac{1}{2\pi \im}\int_\Gamma\frac{f(s)}{s-z}\df s,\quad z\in\compl\setminus\Gamma.
\]
Cauchy integrals and Stieltjes transforms of orthogonal polynomials will also be of use in this work. For orthonormal polynomials $p_k(x)$ corresponding to the weight function $w$ on $\Sigma\subset\real$, define 
\[
C_k(z)=\mathcal{C}_\Sigma[p_kw](z)=\frac{1}{2\pi \im}\int_\Sigma\frac{p_k(s)w(s)}{s-z}\df s, \quad \mathcal{S}_\Sigma[p_kw](z)=2\pi \im\mathcal{C}_\Sigma[p_kw](z),
\] 
for $z\in\mathbb{C}\setminus \Sigma$.
It is well-known that if the three-term recurrence for $p_k$ is given by \eqref{recurr}, these Cauchy integrals also satisfy the three-term recurrence \cite{olver_slevinsky_townsend_2020}:
\begin{equation}\label{intrecurr}
	\begin{aligned}
		&zC_0(z)=a_0C_0(z)+b_0C_1(z)-\frac{1}{2\pi \im},\\
		&zC_k(z)=b_{k-1}C_{k-1}(z)+a_kC_k(z)+b_kC_{k+1}(z),\quad k\geq1.
	\end{aligned}
\end{equation}

\begin{remark}
    It is important to note that the direct numerical use of \eqref{intrecurr} is hampered by the fact that it is intrinsically unstable.  Cauchy integrals decay exponentially in the complex plane, $z \not \in \Sigma$, whereas the orthogonal polynomials grow exponentially. Any error, say one on the order of machine precision, will be amplified exponentially as the recurrence is run.
\end{remark}

\section{Akhiezer polynomials}\label{sect:akh_polys}
In \cite{part1}, we described a Riemann--Hilbert-based numerical method for computing orthogonal polynomials corresponding to genus $g$ weight functions of the form \begin{equation}\label{rhp_weight}w(x)=\sum_{j=1}^{g+1}\mathbbm{1}_{[a_j,b_j]}(x)h_j(x)\left(\sqrt{x-a_j}\right)^{\alpha_j}\left(\sqrt{b_j-x}\right)^{\beta_j},\quad \alpha_j,\beta_j\in\{-1,1\},
\end{equation}
where $[a_j,b_j]$, $j=1,\ldots,g+1$ are disjoint and $h_j$ is positive on $[a_j,b_j]$ and has an analytic extension to a neighborhood $\Omega_j$ of $[a_j,b_j]$. A judicious choice of $h_j$ yields the weight function 
\begin{equation}\label{special_weight}
	w(x)\propto\mathbbm{1}_{\Sigma}(x)\frac{\sqrt{b_{g+1}-x}\prod_{j=1}^{g+1}\sqrt{x-a_j}}{\prod_{j=1}^{g}\sqrt{x-b_j}}.
\end{equation}
If the weight function is restricted to be of the form \eqref{special_weight}, the underlying Riemann--Hilbert problem can be simplified greatly, and the numerical method can be sped up significantly. See Appendix \ref{ap:rhp} for a full discussion of this. In a sense, weight functions of the form \eqref{special_weight} are a natural extension of the Chebyshev polynomials, as they limit to a scaled and shifted Chebyshev-second-kind weight as the gaps between the intervals approach zero.  




The Akhiezer polynomials are defined as the orthogonal polynomials with respect to the weight function
\begin{equation}\label{eq:gen_akh_weight}
w(x)=\frac{1}{\pi}\mathbbm{1}_{\Sigma}(x)\frac{\prod_{j=1}^{g}\sqrt{x-b_j}}{\sqrt{b_{g+1}-x}\prod_{j=1}^{g+1}\sqrt{x-a_j}},
\end{equation}
which is the normalized reciprocal of \eqref{special_weight} and corresponds to a Chebyshev-first-kind-like weight \cite{Chen2007}. Formulae for such polynomials can be worked out explicitly in terms of Riemann theta functions \cite{Chen2002}. In the case $g=1$, the Riemann theta functions reduce to Jacobi theta functions, and the polynomials can be computed efficiently via these theta functions. In the following, we summarize Akhiezer's construction of the polynomials from \cite[Section 53]{akhiezer} and derive formulae for their Cauchy integrals and recurrence coefficients.

\subsection{Akhiezer's formulae and notation}
Following Akhiezer's convention, we consider\footnote{This can be generalized to any configuration of two disjoint intervals by shifting and scaling the resulting polynomials as appropriate.} $\Sigma=[-1,\alpha]\cup[\beta,1]$ such that $-1<\alpha<\beta<1$ and define the weight function 
\begin{equation}\label{akh_weight}
	w(x)=\frac{1}{\pi}\mathbbm{1}_\Sigma(x)\frac{\sqrt{x-\alpha}}{\sqrt{1-x}\sqrt{x+1}\sqrt{x-\beta}}.
\end{equation}
Then, the orthonormal polynomials with respect to the weight function $w$ can be computed according to the following procedure: First, compute the elliptic modulus\footnote{In this subsection, we follow standard notation and let $k$ be the elliptic modulus, not the iteration number of an algorithm.} \cite[Equation 6]{akhiezer}
\begin{equation*}
	k=\sqrt{\frac{2(\beta-\alpha)}{(1-\alpha)(1+\beta)}},
\end{equation*}
which will serve as the elliptic modulus in all Jacobi elliptic functions that follow. Additionally, $K=K(k)$ is the complete elliptic integral:
\begin{equation*}
	K(k)=\int_{0}^{1}\frac{\df t}{\sqrt{(1-t^2)(1-k^2t^2)}}=\int_{0}^{\pi/2}\frac{\df\theta}{\sqrt{1-k^2\sin^2\theta}}.
\end{equation*}
Then, the parameter $\rho$ is defined so that 
\begin{equation*}
	1-2\sn^2\rho=\alpha,
\end{equation*}
and $0<\rho<K$ where $\sn(\diamond)=\sn(\diamond,k)$ denotes the Jacobi $\sn$ function \cite[Equation 10]{akhiezer}. Since Jacobi elliptic functions are defined as the inverses of elliptic integrals, $\rho$ can be written as 
\begin{equation*}
	\rho=F\left(\arcsin\sqrt{\frac{1-\alpha}{2}},k\right),
\end{equation*}
where $F$ is the incomplete elliptic integral of the first kind defined by
\begin{equation*}
	F(\phi,k)=\int_0^{\sin\phi}\frac{\df t}{\sqrt{(1-t^2)(1-k^2t^2)}}=\int_{0}^{\phi}\frac{\df\theta}{\sqrt{1-k^2\sin^2\theta}}.
\end{equation*}
Now, given $x\in\compl$, define $u=u(x)$ such that \cite[Equation 9]{akhiezer}
\begin{equation*}
	x-\alpha=\frac{1-\alpha^2}{2\sn^2(u)+\alpha-1}.
\end{equation*}
An explicit formula for $u$ is
\begin{equation}\label{u_formula}
	u=F\left(\arcsin\sqrt\frac{(\alpha-1)(1+x)}{2(\alpha-x)},k\right).
\end{equation}
Define the Jacobi theta functions
\begin{align*}
	&\theta_1(z,q)=2\sum_{j=0}^{\infty}(-1)^jq^{\left(j+\frac{1}{2}\right)^2}\sin\left((2j+1)z\right),\\
	&\theta_4(z,q)=1+2\sum_{j=1}^{\infty}(-1)^jq^{j^2}\cos\left(2jz\right).
\end{align*}
In what follows, we will always take the parameter $q$ to be given by
\begin{equation*}
	q=q(k)=\exp\left(-\pi \frac{K(\sqrt{1-k^2})}{K(k)}\right).
\end{equation*}
Now, define the functions
\begin{equation*}
	H(z)=\theta_1\left(\frac{\pi z}{2K},q\right),\quad \Theta(z)=\theta_4\left(\frac{\pi z}{2K},q\right).
\end{equation*}
Then, the $n$th degree orthonormal polynomial $p_n$ is given by \cite{akhiezer}
\begin{equation}\label{poly}
	p_n(x)=\frac{C_n}{2}\left(\left(\frac{H(u-\rho)}{H(u+\rho)}\right)^n\frac{\Theta(u+2n\rho)}{\Theta(u)}+\left(\frac{H(u+\rho)}{H(u-\rho)}\right)^n\frac{\Theta(u-2n\rho)}{\Theta(u)}\right),
\end{equation}
where
\begin{equation*}
	C_n=\frac{\sqrt{2}\Theta(\rho)}{\sqrt{\Theta\left((2n-1)\rho\right)\Theta\left((2n+1)\rho\right)}},
\end{equation*}
for $n\geq1$, $C_0=1$, and $u$ is given by \eqref{u_formula}.

Akhiezer's derivation also includes a formula for the Cauchy integrals of these polynomials:
\begin{equation}\label{eq:cauchy_ints}
	\mathcal{C}_\Sigma\left[p_nw\right](x)=-\frac{C_n}{2\pi\im}\left(\frac{H(u-\rho)}{H(u+\rho)}\right)^n\frac{\Theta(u+2n\rho)}{\Theta(u)}\frac{\sqrt{x-\alpha}}{\sqrt{x-1}\sqrt{x+1}\sqrt{x-\beta}},
\end{equation}
with $u$ and $C_n$ defined as above.

In Appendix \ref{ap:recurr}, we derive the following formulae for the recurrence coefficients of the Akhiezer polynomials:
\begin{equation}\label{eq:akh_recurr}
\begin{aligned}
	a_0=&(1-\alpha^2)\left(\frac{1}{8\sn^2\rho}+\frac{\sn''\rho}{8\sn\rho\left(\sn'\rho\right)^2}+\frac{H'(2\rho)}{H(2\rho)}\frac{1}{4\sn\rho\sn'\rho}\right)+\frac{1-\alpha^2}{2\sn\rho\sn'\rho}\frac{\Theta'\left(\rho\right)}{\Theta\left(\rho\right)}+\alpha,\\
	a_n=&(1-\alpha^2)\left(\frac{1}{8\sn^2\rho}+\frac{\sn''\rho}{8\sn\rho\left(\sn'\rho\right)^2}\right)\\&+\frac{1-\alpha^2}{4\sn\rho\sn'\rho}\left(\frac{H'(2\rho)}{H(2\rho)}+\frac{\Theta'\left((2n+1)\rho\right)}{\Theta\left((2n+1)\rho\right)}-\frac{\Theta'\left((2n-1)\rho\right)}{\Theta\left((2n-1)\rho\right)}\right)+\alpha,\quad n\geq1,\\
	b_0=&\frac{1-\alpha^2}{4\sn\rho\sn'\rho}\frac{H'(0)}{H(2\rho)}\sqrt{\frac{2\Theta(3\rho)}{\Theta(\rho)}},\\
	b_n=&\frac{1-\alpha^2}{4\sn\rho\sn'\rho}\frac{H'(0)}{H(2\rho)}\frac{\sqrt{\Theta((2n+3)\rho)\Theta((2n-1)\rho)}}{\Theta((2n+1)\rho)},\quad n\geq1.
\end{aligned}
\end{equation}

Both the polynomials themselves and their Cauchy integrals can be efficiently computed via standard algorithms for computing Jacobi theta functions and elliptic integrals \cite[Sections 19.36, 20.14]{NIST:DLMF} while the recurrence coefficients can be efficiently computed via standard algorithms for computing Jacobi elliptic \cite[Section 22.20]{NIST:DLMF} and Jacobi theta functions. In particular, the parameter $q$ is typically very small and grows very slowly as the size of the intervals shrinks, so the Jacobi theta functions can be computed by simply summing the first few terms of their respective series. However, one weakness of these formulae is that the mapped variable $u(x)$ is undefined at $x=\alpha$. While, by continuity, $u(\alpha)=\im K(\sqrt{1-k^2})$, the non-Lipschitz behavior of the involved functions leads to numerical instabilities when this value is used in the formulae. Thus, if one needs to compute the polynomials at this endpoint, it is better to generate them via the recurrence coefficients rather than using \eqref{poly}.

Otherwise, we find that these formulae are extremely efficient for evaluating the Akhiezer polynomials, their Cauchy integrals, and their recurrence coefficients in this special $g=1$ case, outperforming by multiple orders of magnitude both our $\OO(n)$ Riemann--Hilbert approach and the optimized $\OO(n^2)$ algorithm given as RKPW in \cite{Gragg1984} and \texttt{lanczos.m} in \cite{gautschi}. Thus, these formulae should be used for all two interval problems for which one can choose the weight function to be of the form \eqref{akh_weight}. For higher genus problems, choosing a weight function of the form \eqref{special_weight} allows our Riemann--Hilbert approach to be applied efficiently at all degrees. If one wishes to consider a more general weight function of the form \eqref{rhp_weight}, they should consider applying the RKPW algorithm for the low degree polynomials then switching to the asymptotic version of our Riemann--Hilbert approach for higher degrees. The Cauchy integrals for the lower degree polynomials can then be obtained by applying the three-term recurrence \eqref{intrecurr} to those obtained from our Riemann--Hilbert approach. Indeed, \eqref{intrecurr} can be written as the linear system
\begin{equation*}
	\begin{pmatrix}
		a_0-z &b_0\\
		b_0 &a_1-z &b_1\\
		&b_1 &a_2-z &b_2\\
		&&\ddots &\ddots &\ddots
	\end{pmatrix}\begin{pmatrix}
		C_0(z)\\C_1(z)\\C_2(z)\\\vdots
	\end{pmatrix}=\begin{pmatrix}
		\frac{1}{2\pi \im}\\0\\0\\\vdots
	\end{pmatrix}.
\end{equation*}
If for some $N$, $C_{N+2}(z)$ and $C_{N+1}(z)$ are known, then the truncated linear system linear system 
\begin{equation*}
	\begin{pmatrix}
		a_0-z &b_0\\
		b_0 &a_1-z &b_1\\
		&\ddots &\ddots &\ddots\\
		&&b_{N-2}&a_{N-1}-z&b_{N-1}\\
		&&&b_{N-1}&a_N-z\\
		&&&&b_N
	\end{pmatrix}\begin{pmatrix}
		C_0(z)\\C_1(z)\\\vdots\\C_{N-1}(z)\\C_{N-2}(z)\\C_N(z)
	\end{pmatrix}=\begin{pmatrix}
		\frac{1}{2\pi \im}\\0\\\vdots\\0\\
		-b_NC_{N+1}(z)\\
		(z-a_{N+1})C_{N+1}(z)-b_{N+1}C_{N+2}(z)
	\end{pmatrix},
\end{equation*}
can be solved efficiently to obtain $C_0(z),\ldots,C_N(z)$. As noted previously, the forward recurrence \eqref{intrecurr} becomes more numerically unstable the further $z$ is from $\Sigma$, so it is not, in general, a reliable way to compute Cauchy integrals.

\section{The Akhiezer iteration and matrix function evaluation}\label{sect:akh_iter}
\subsection{The Akhiezer iteration for solving linear systems}
Following \cite{Saad}, orthogonal polynomials with respect to a weight function $w$ of the form \eqref{rhp_weight} (in particular \eqref{akh_weight}) can be used to extend the classical Chebyshev iteration to solve indefinite linear systems. Assume that a square matrix $\bA$ has eigenvalues on, or near, $\Sigma=\bigcup_{j=1}^{g+1}[a_j,b_j]$ and that $0\notin\Sigma$. If $p_0,p_1,\ldots$ denote the orthonormal polynomials with respect to $w$, then a $p_j$-series expansion for $1/(x-z)$, for $x \in \Sigma$,  $z\notin\Sigma$, is given by
\begin{equation}\label{eq:gen_expand}
	\begin{aligned}
\frac{1}{x-z}&=\sum_{j=0}^{\infty}\left\langle p_j,\frac{1}{\diamond -z}\right\rangle p_j(x)=\sum_{j=0}^{\infty}\left(\int_\Sigma\frac{p_j(s)}{s-z}w(s)\df s \right)p_j(x)=
\sum_{j=0}^{\infty}\mathcal{S}_\Sigma\left[p_jw\right](z)p_j(x).
	\end{aligned}
\end{equation}
This can be extended to an iterative method for solving the linear system $\bA \bx=\bb$ by truncating the series with $z = 0$:
\begin{equation}\label{eq:iteration}
	\bx=\bA^{-1}\bb=\sum_{j=0}^{\infty}\mathcal{S}_\Sigma\left[p_jw\right](0)p_j(\bA)\bb\approx\sum_{j=0}^{k}\mathcal{S}_\Sigma\left[p_jw\right](0)p_j(\bA)\bb=:\bx_{k+1}.
\end{equation}
We call the iterative implementation of this approximation \emph{the Akhiezer iteration}. The recurrence coefficients and Cauchy integrals can be computed either by our Riemann--Hilbert approach \cite{part1} or by \eqref{eq:cauchy_ints} and \eqref{eq:akh_recurr} in the case of the $g=1$ Akhiezer weight. Then, the Akhiezer iteration for shifted linear solves can be implemented as in Algorithm \ref{alg:akh_iter}.
\begin{algorithm}
	\caption{Akhiezer iteration to approximate $(\bA - z \bI)^{-1} \bb$, $z \not \in \Sigma$}\label{alg:akh_iter}
	\textbf{Input: }{$\bA$, $\bb$, initial guess $\bx_0$, and a function to compute recurrence coefficients $a_k,b_k$ and Stieltjes transforms $S_k=\mathcal{S}_\Sigma\left[p_kw\right](z)$.}\\
	Set $\bb\leftarrow\bb-\bA\bx_0$.\\
	Set $\bp_{0}=\bb$.\\
	Set $\bx_{1}=S_0\bp_{0}$.\\
	\For{k=1,2,\ldots}{
		\uIf{k=1}{
			Set $\bp_{1}=\frac{1}{b_0}(\bA\bp_0-a_0\bp_0)$.
		}
		\Else{
			Set $\bp_{k}=\frac{1}{b_{k-1}}(\bA\bp_{k-1}-a_{k-1}\bp_{k-1}-b_{k-2}\bp_{k-2}).$
		}
		Set $\bx_{k+1}=\bx_k+S_k\bp_{k}.$\\
            \If{\emph{converged}}{
            Set $\bx_{k+1}\leftarrow\bx_{k+1}+\bx_0$.\\
            \Return{$\bx_{k+1}$
            }
	}
	}
\end{algorithm}

Algorithm \ref{alg:akh_iter} can instead be utilized to build an approximation to the matrix inverse $\bA^{-1}$ by using $\bb=\bI$ and removing the initial guess $\bx_0$. Then, at each step, the algorithm requires matrix-matrix multiplication rather than matrix-vector products but is otherwise identical.

\begin{remark}\label{r:diff}
    In \eqref{eq:gen_expand}, one can differentiate with respect to $z$ a number of times to find expansions for $x^{-p}$ for $p \in \mathbb N$, provided one can compute derivatives of the Stieltjes transforms $\mathcal S_{\Sigma}$. The Riemann--Hilbert approach \cite{part1} allows for this computation in a fairly straightforward manner, but we do not implement this here. 
\end{remark}

\subsubsection{Convergence}\label{sect:lin_conv}
From \cite{Ding2022}, we have the following behavior of the orthogonal polynomials:
\begin{lemma}\label{poly_asymp}
	Fix $\epsilon>0$ and the set $V=\{z\in\compl~:~|z-a_j|\geq\epsilon,~|z-b_j|\geq\epsilon,~j=1,\ldots,g+1\}$. Then,
	\begin{equation*}
		\begin{aligned}
		&p_n(z)=\hat\delta_n(z)\ex^{n\mathfrak{g}(z)},&\quad&2\pi\im\mathcal{C}_\Sigma\left[p_nw\right](z)=\delta_n(z)\ex^{-n\mathfrak{g}(z)},\quad z\in V,\\
		\end{aligned}
	\end{equation*}
where $\delta_n(z)$ and $\hat\delta_n(z)$ are uniformly bounded in both $n$ and $z\in V$ and $\mathfrak g$ is the exterior Green's function with pole at infinity\footnote{See Appendix~\ref{ap:rhp} for the construction of $\mathfrak g$.  We point out that it is determined by $\Sigma$ alone.} corresponding to $\Sigma$.  Furthermore, $\ex^{\mathfrak g(z)}$ is a conformal map from $\mathbb C \setminus \Sigma$ to the exterior of the unit disk satisfying $\re \mathfrak g(z) = 0$ for $z \in \Sigma$.
\end{lemma}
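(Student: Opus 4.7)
The plan is to prove Lemma \ref{poly_asymp} by invoking the Riemann--Hilbert (RH) steepest descent machinery of Deift--Zhou as developed in \cite{part1, Ding2022}. The starting point is the Fokas--Its--Kitaev characterization: the $2\times 2$ matrix $Y_n(z)$ with entries encoding the monic orthogonal polynomial $\pi_n$ and its Cauchy integral satisfies an RHP with jump $I + w(s)E_{12}$ on $\Sigma$ and prescribed polynomial growth at infinity. After dividing by $\sqrt{h_n}$, the $(1,1)$ entry is $p_n$ and the $(1,2)$ entry is a rescaled Cauchy integral, and proving the lemma amounts to extracting large-$n$ asymptotics of these two entries uniformly on $V$.

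Next, I would apply the $g$-function transformation $T_n(z) = \ex^{n\ell\sigma_3/2} Y_n(z) \ex^{-n\mathfrak{g}(z)\sigma_3} \ex^{-n\ell\sigma_3/2}$, where $\mathfrak{g}$ is the exterior Green's function with pole at infinity and $\ell$ is the associated Robin constant. The defining properties $\mathfrak{g}(z) = \log z + O(1/z)$ near infinity and $\mathrm{Re}\,\mathfrak{g} = 0$ on $\Sigma$ normalize $T_n$ to tend to $I$ at infinity and convert the upper-triangular jump on $\Sigma$ into an oscillatory form. Opening lenses in the usual way and constructing a global outer parametrix $N(z)$ (built out of Riemann theta functions in the multi-interval case) together with Airy parametrices in small disks around the endpoints $a_j, b_j$, the ratio $R_n = T_n N^{-1}$ satisfies a small-norm RHP whose solution is $I + O(1/n)$ uniformly on any set bounded away from the endpoints, in particular on $V$.

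Undoing the transformation yields $p_n(z) = [N(z)]_{11}(1+O(1/n)) \ex^{n\mathfrak{g}(z)}$ and analogously $2\pi\im\,\mathcal{C}_\Sigma[p_n w](z) = [N(z)]_{12}(1+O(1/n))\ex^{-n\mathfrak{g}(z)}$; setting $\hat\delta_n(z) = [N(z)]_{11}(1+O(1/n))$ and $\delta_n(z) = [N(z)]_{12}(1+O(1/n))$ gives the two displayed asymptotics, and their uniform boundedness on $V$ follows from the boundedness of the explicit $N$ once one stays away from the endpoints (where Airy parametrices prevent $N$ from blowing up but require separate treatment). The statements about $\mathfrak{g}$ itself are classical potential theory: on the unbounded component of $\mathbb{C}\setminus\Sigma$, $\mathfrak{g}$ is harmonic with logarithmic growth at infinity and vanishing real part on $\Sigma$, so $\ex^{\mathfrak{g}}$ is a single-valued conformal map onto the exterior of the unit disk.

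The main obstacle is the explicit construction of the outer parametrix $N$ for genus $g\geq 1$, which requires Riemann theta functions tuned to the hyperelliptic curve $y^2 = \prod_j (z-a_j)(z-b_j)$ and is precisely the content worked out in \cite{Ding2022}; the remainder of the Deift--Zhou argument (lens opening, Airy local parametrices, and the small-norm theorem) is technical but standard. Thus the bulk of the work reduces to invoking that reference and verifying that the relevant estimates are uniform on the set $V$ defined by staying at distance at least $\epsilon$ from the $2(g+1)$ endpoints.
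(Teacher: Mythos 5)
Your overall strategy is the same as the paper's: both rest on the Deift--Zhou steepest descent analysis of the Fokas--Its--Kitaev problem carried out in \cite{Ding2022} (the $\mathfrak g$-function transformation, lens opening, theta-function outer parametrix, Airy local parametrices, small-norm argument). However, your final step has a gap precisely where the paper's proof does its only real work. The RH analysis produces asymptotics for the \emph{monic} polynomials $\pi_n$ and their Cauchy integrals; passing to the orthonormal polynomials multiplies by $h_n^{-1/2}=\prod_{j=0}^{n-1}b_j^{-1}$, while $\ex^{n\mathfrak g(z)}\sim(\mathfrak c z)^n$ at infinity. What one actually obtains is therefore $\hat\delta_n(z)=\bigl(\prod_{j=0}^{n-1}(b_j\mathfrak c)^{-1}\bigr)\cdot(\textrm{bounded quantity})$, not $[N(z)]_{11}(1+\OO(1/n))$, and similarly for $\delta_n$. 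The uniform boundedness in $n$ does not ``follow from the boundedness of the explicit $N$''; it requires the additional fact that $\mathfrak c^{\,n}\prod_{j=0}^{n-1}b_j$ stays bounded and bounded away from zero, which the paper extracts from \cite[Equations B.3, B.12]{Ding2022} and which is essentially the entire content of its proof beyond citation. Relatedly, your two stated normalizations, $\mathfrak g(z)=\log z+\OO(1/z)$ and $\re\mathfrak g=0$ on $\Sigma$, are incompatible unless $\Sigma$ has logarithmic capacity one; the paper's $\mathfrak g$ is the Green's function with $\mathfrak g(z)=\log(\mathfrak c z)+\OO(1/z)$, and the constant $\log\mathfrak c$ is exactly what makes the product above appear.

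A secondary issue: $V$ contains points of $\Sigma$ itself and points inside the lenses, where $\re\mathfrak g$ vanishes or is small. There the representation $p_n(z)=[N(z)]_{11}(1+\OO(1/n))\ex^{n\mathfrak g(z)}$ is not the correct asymptotic: after undoing the lens opening, $p_n$ is a sum of two terms of comparable size, one carrying $\ex^{n\mathfrak g}$ and one $\ex^{-n\mathfrak g}$. Since $\re\mathfrak g\ge 0$ off $\Sigma$, both can still be absorbed into a bounded $\hat\delta_n$, but this ``undoing the lensing step'' must be stated explicitly (as the paper does) for the claimed form to hold on all of $V$ rather than only outside the lenses.
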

\begin{proof}
By \cite[Equations B.2, B.3]{Ding2022} and undoing the lensing step of the Riemann--Hilbert problem deformations, the above formulae hold for $\delta_n(z)$ and $\hat\delta_n(z)$ depending on bounded quantities and $\prod_{j=0}^{n-1}\frac{1}{b_j\mathfrak{c}}$ where $\log \mathfrak c$ is the $O(1)$ term in the expansion of $\mathfrak g$ at infinity. By \cite[Equations B.3, B.12]{Ding2022}, 
\begin{equation*}
\mathfrak{c}^{n}\prod_{j=0}^{n-1}b_j=\OO(1),
\end{equation*}
as $n\to\infty$, so $\delta_n(z)$ and $\hat\delta_n(z)$ are bounded from above.  The rest of the claims follow immediately from \cite{Ding2022}.
\end{proof}
In the $g=1$ case,
\begin{equation*}
\ex^{\mathfrak g(z)}=-\frac{H(u+\rho)}{H(u-\rho)},
\end{equation*} 
in the notation of Akhiezer's formulation \eqref{poly}. When $g$ is larger than one, $\mathfrak g(z)$ can be computed numerically as described in \cite[Section 4.2]{part1}. 

To ease presentation, we introduce the following definitions.
\begin{definition}
We say that a matrix is generic if it is diagonalizable and has no eigenvalues at the endpoints of $\Sigma$.
\end{definition}
This definition is here to simplify the analysis, and our theorems require the matrix in question to be generic.  With a more technical approach, one can obtain similar bounds for non-generic matrices and the algorithms in question still converge at roughly the same rate. We remark on this below.

\begin{definition}
Given $\Sigma$ and $\bA \in \mathbb C^{n \times n}$ define
\begin{align*}
    \nu(z;\bA)=\max_j\re\mathfrak g(\lambda_j)-\re \mathfrak{g}(z), \quad \nu(\bA)=\max_j\re\mathfrak g(\lambda_j).
\end{align*}
where $\lambda_j$ are the eigenvalues of $\bA$.
\end{definition}

In the following, $\|\diamond\|_2$ will denote the Euclidean 2-norm and the induced matrix norm.
\begin{theorem}\label{thm:lin_error}
Consider a density $w$ of the form \eqref{rhp_weight}.  If $\bA\in\compl^{n\times n}$ is generic and its eigenvalues $\lambda_j$ all satisfy $\re\mathfrak g(\lambda_j)<\re \mathfrak{g}(z)$, then there exists $c> 0$, depending only on the eigenvalues of $\bA$, such that
\begin{equation*}
E_k(z)=\left\|
\sum_{j=0}^{k-1}\mathcal{S}_\Sigma\left[p_jw\right](z)p_j(\bA)\bb-(\bA-z\bI)^{-1}\bb\right\|_2,
\end{equation*}
satisfies
\begin{equation*}
E_k(z)\leq c\|\bV\|_2\left\|\bV^{-1}\right\|_2\|\bb\|_2\frac{\ex^{k\nu(z;\bA)}}{1-\ex^{\nu(z;\bA)}},
\end{equation*}
 where $\bA$ is diagonalized as $\bA=\bV\bLambda\bV^{-1}$, $\bLambda = {\rm diag}(\lambda_1,\ldots,\lambda_n)$. In particular, the Akhiezer iteration converges, at worst, at a geometric rate given by $\ex^{\nu(z;\bA)}$ if $z\notin\Sigma$ since
 \begin{align*}
     \| \bx_k - (\bA - z \bI)^{-1}\bb \|_2 = E_k(z).
 \end{align*}
\end{theorem}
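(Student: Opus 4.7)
The plan is to recognize $E_k(z)$ as the tail of the series \eqref{eq:gen_expand} applied to $\bA$, reduce the matrix quantity to a scalar estimate via diagonalization, bound each scalar term using Lemma~\ref{poly_asymp}, and sum the resulting geometric series.

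First, I would substitute $x = \lambda_i$ into \eqref{eq:gen_expand} for each eigenvalue of $\bA$; since $z \notin \sigma(\bA)$, each such scalar series represents $1/(\lambda_i - z)$, and convergence will be justified a posteriori by the bound below. Using genericity to write $\bA = \bV\bLambda\bV^{-1}$, this promotes to the operator identity $(\bA - z\bI)^{-1}\bb = \sum_{j=0}^\infty \mathcal{S}_\Sigma[p_j w](z)\, p_j(\bA)\bb$, whence
\[
E_k(z) = \left\| \bV \Bigl(\sum_{j=k}^\infty \mathcal{S}_\Sigma[p_j w](z)\, p_j(\bLambda)\Bigr) \bV^{-1}\bb \right\|_2 \leq \|\bV\|_2\|\bV^{-1}\|_2\|\bb\|_2 \max_i \sum_{j=k}^\infty \bigl|\mathcal{S}_\Sigma[p_j w](z)\, p_j(\lambda_i)\bigr|,
\]
where I use that $p_j(\bLambda)$ is diagonal so its 2-norm equals the maximum modulus of its entries.

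Second, because genericity excludes eigenvalues at the endpoints of $\Sigma$, I can choose $\epsilon > 0$ so that the region $V$ of Lemma~\ref{poly_asymp} contains $z$ together with every $\lambda_i$. The lemma then furnishes a constant $M$, depending only on the eigenvalues and $z$, with $|\hat\delta_j(\lambda_i)| \leq M$ and $|\delta_j(z)| \leq M$ uniformly in $j$, yielding
\[
\bigl|\mathcal{S}_\Sigma[p_j w](z)\, p_j(\lambda_i)\bigr| \leq M^2\, \ex^{j(\re\mathfrak{g}(\lambda_i) - \re\mathfrak{g}(z))} \leq M^2\, \ex^{j\nu(z;\bA)}.
\]
Since $\nu(z;\bA) < 0$ by hypothesis, summing the geometric series from $j = k$ gives $M^2 \ex^{k\nu(z;\bA)}/(1 - \ex^{\nu(z;\bA)})$, which is the stated bound with $c = M^2$. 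The identification with $\|\bx_k - (\bA - z\bI)^{-1}\bb\|_2$ is immediate from Algorithm~\ref{alg:akh_iter}.

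The argument is largely bookkeeping once Lemma~\ref{poly_asymp} is in hand; the only delicate point is that eigenvalues may lie in the interior of $\Sigma$, where $\re\mathfrak{g}(\lambda_i) = 0$ and the polynomials oscillate rather than decay. This causes no trouble because Lemma~\ref{poly_asymp}'s region $V$ does include the interior of $\Sigma$ and $|\ex^{j\mathfrak{g}(\lambda_i)}| = 1$ there, so $|p_j(\lambda_i)| \leq M$ still holds. I expect the main obstacle in a fully rigorous write-up to be confirming that the constants in Lemma~\ref{poly_asymp} can be taken uniform across the finite set $\{z, \lambda_1, \ldots, \lambda_n\}$, which is a simple compactness argument. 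Non-generic $\bA$ would require replacing $\bV, \bV^{-1}$ by a Jordan decomposition and absorbing an extra polynomial-in-$k$ factor arising from $p_j$ evaluated on nontrivial Jordan blocks; this does not affect the exponential rate.
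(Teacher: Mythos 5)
Your proposal is correct and follows essentially the same route as the paper's proof: diagonalize, pull out $\|\bV\|_2\|\bV^{-1}\|_2\|\bb\|_2$, bound each term $|\mathcal{S}_\Sigma[p_jw](z)p_j(\lambda_\ell)|$ via the uniform bounds on $\delta_j$ and $\hat\delta_j$ in Lemma~\ref{poly_asymp}, and sum the geometric series. The only cosmetic difference is that you take the maximum over eigenvalues outside the sum rather than inside, which yields the identical bound.
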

\begin{proof}
We have
\begin{equation*}
\begin{aligned}
&E_k(z)=\left\|
\sum_{j=0}^{k-1}\mathcal{S}_\Sigma\left[p_jw\right](z)p_j(\bA)\bb-(\bA-z\bI)^{-1}\bb\right\|_2=\left\|\sum_{j=k}^{\infty}\mathcal{S}_\Sigma\left[p_jw\right](z)p_j(\bA)\bb\right\|_2\\&=
\left\|\sum_{j=k}^{\infty}2\pi\im\mathcal{C}_\Sigma\left[p_jw\right](z)\bV p_j(\bLambda)\bV^{-1}\bb\right\|_2
\\ &\leq
\|\bV\|_2\left\|\bV^{-1}\right\|_2\|\bb\|_2\sum_{j=k}^{\infty}\max_{\ell\in\{1,\ldots,n\}}\left|2\pi\im\mathcal{C}_\Sigma\left[p_jw\right](z)p_j(\lambda_\ell)\right|.
\end{aligned}
\end{equation*}
Since $\lambda_\ell$ is not at an endpoint of $\Sigma$, by Lemma \ref{poly_asymp},
\begin{equation*}
\left|2\pi\im\mathcal{C}_\Sigma\left[p_jw\right](z)p_j(\lambda_\ell)\right|=|\delta_j(z)||\hat\delta_j(\lambda_\ell)|\ex^{j\re\left(\mathfrak g(\lambda_\ell)-\mathfrak{g}(z)\right)}\leq c\ex^{j\nu(z;\bA)},
\end{equation*}
for some constant $c$. The theorem follows from
\begin{equation*}
\begin{aligned}
\sum_{j=k}^{\infty}\ex^{j\nu(z;\bA)}= \frac{\ex^{k\nu(z;\bA)}}{1-\ex^{\nu(z;\bA)}}.
\end{aligned}
\end{equation*}
\end{proof}

\begin{remark}
Theorem \ref{thm:lin_error} could, in principle, be extended to a broader class of matrices. For instance, if $\bA$ does have eigenvalues at the endpoints of $\Sigma$, the orthogonal polynomials at these endpoints are either bounded or grow at a polynomial rate. Since the convergence rate is geometric, this growth could be accounted for in the error bound without inhibiting convergence. However, one could instead perturb $\Sigma$ so that these eigenvalues lie in the interior, and the iteration will suffer a small perturbation to its geometric rate of convergence.
\end{remark}
\begin{remark}
    Jordan blocks, i.e., degenerate eigenspaces for $\bA$, could be accounted for by investigating the derivatives of the orthogonal polynomials.  Due to the Markov brothers' inequality, \cite[Theorem 1.10]{rivlin_81}, it is clear that this will not change the exponential rate of convergence.  Yet, more refined estimates are possible.
\end{remark}

\subsection{The Akhiezer iteration for matrix function approximation}

The Akhiezer iteration can be extended to compute matrix functions for square matrices $\bA$ with eigenvalues on or near $\Sigma=\bigcup_{j=1}^{g+1}[a_j,b_j]$. Consider the Cauchy integral representation of a matrix function
\begin{equation*}
f(\bA)\bb=\frac{1}{2\pi\im}\int_\Gamma f(z)(z\bI-\bA)^{-1}\bb\df z,
\end{equation*}
where $\Gamma$ is a counterclockwise oriented curve that encloses the spectrum of $\bA$ and $f$ is analytic in a region containing $\Gamma$. Applying a quadrature rule, such as a trapezoid rule \cite{trapezoid}, with nodes $z_j$ and weights $w_j$, this can be approximated as
\begin{equation*}
	f(\bA)\bb\approx\frac{1}{2\pi\im}\sum_{j=1}^m f(z_j)w_j(z_j\bI-\bA)^{-1}\bb.
\end{equation*}
In practice, we find that it typically suffices to take $\Gamma=\bigcup_{j=1}^{g+1}\Gamma_j$ to be a collection of circles around each $[a_j,b_j]$. In this case the trapezoid rule on each $\Gamma_k$ has weights and nodes
\begin{equation*}
w_j=\frac{2\pi\im r_k}{m_k}\ex^{\im\theta_j^{(k)}},\quad z_j=r_k\ex^{\im\theta_j^{(k)}}+c_k,\quad j=1,\ldots,m_k,
\end{equation*}
where $\{\theta_j^{(k)}\}_{j=1}^{m_k}$ are evenly spaced points in $[0,2\pi)$, $r_k$ and $c_k$ are the radius and center of $\Gamma_k$, respectively, and $m_k$ is the number of quadrature nodes on $\Gamma_k$, $\sum_{j=1}^{g+1}m_j=m$.  For convenience, we reorder the weights and nodes for all contours $\Gamma_k$ as
\begin{align*}
    z_j, w_j, \quad j = 1,2,\ldots,m,
\end{align*}
with the first $m_1$ corresponding to $\Gamma_1$, the next $m_2$ corresponding to $\Gamma_2$, and so on.

To approximate $f(\bA)\bb$, the resolvent is approximated by
\begin{equation*}
(z_j\bI-\bA)^{-1}=-\sum_{\ell=0}^{\infty}\mathcal{S}_\Sigma\left[p_\ell w\right](z_j)p_\ell(\bA)\approx-\sum_{\ell=0}^{k-1}\mathcal{S}_\Sigma\left[p_\ell w\right](z_j)p_\ell(\bA),
\end{equation*}
and the matrix function is approximated by
\begin{equation}\label{mat_func_iter}
f(\bA)\bb\approx \bff_k := f_{k,m}(\bA)\bb:=-\sum_{\ell=0}^{k-1}\left( \sum_{j=1}^mf(z_j)w_j\mathcal{C}_\Sigma\left[p_\ell w\right](z_j)\right)p_\ell(\bA)\bb.
\end{equation}
As before, the recurrence coefficients and Cauchy integrals can be precomputed either by our Riemann--Hilbert approach \cite{part1} or by \eqref{eq:cauchy_ints} and \eqref{eq:akh_recurr} in the case of the $g=1$ Akhiezer weight. Then, the Akhiezer iteration for matrix function approximation can be implemented as in Algorithm~\ref{alg:akh_func}.
\begin{algorithm}
	\caption{Akhiezer iteration for matrix function approximation}\label{alg:akh_func}
	\textbf{Input: }{$f$, $\bA$, $\bb$, quadrature nodes $z_1,\ldots,z_m$, quadrature weights $w_1,\ldots,w_m$, and a function to compute recurrence coefficients $a_k,b_k$ and Cauchy integrals $C_{k,j}=\mathcal{C}_\Sigma\left[p_k w\right](z_j)$.}\\
	Set $\bff_0=0$.\\
	\For{k=0,1,\ldots}{
		\uIf{k=0}{
			Set $\bp_0=\bb$.
		}
		\uElseIf{k=1}{
			Set $\bp_{1}=\frac{1}{b_0}(\bA\bp_0-a_0\bp_0)$.
		}
		\Else{
			Set $\bp_{k}=\frac{1}{b_{k-1}}(\bA\bp_{k-1} - a_{k-1}\bp_{k-1} -  b_{k-2}\bp_{k-2}).$
		}
            Set $\alpha_k = -\sum_{j=1}^m C_{k,j} f(z_j) w_j$.\\
            Set $\bff_{k+1}=\bff_k + \alpha_k \bp_k$.\\
            \If{\emph{converged}}{\Return{$\bff_{k+1}$}.}
	}
\end{algorithm}
\begin{remark}
    The approximation to $f$ need not be from the discretization of a Cauchy integral.  It could be from a Pad\'e approximation, a AAA approximation \cite{Nakatsukasa2018}, or some other form of rational approximation.  As long as the approximation can be written in pole-residue form, the Akhiezer iteration can be applied. We refer the reader back to Remark~\ref{r:diff} for the case when the approximation contains more than just simple poles. 
\end{remark}

%
%

\subsubsection{Convergence}
\begin{theorem}\label{thm:func_error}
	Suppose $\bA\in\compl^{n\times n}$ is generic, its eigenvalues $\lambda_\ell$ satisfy $\re\mathfrak g(\lambda_\ell)<\min_{j=1,\ldots,m}\re \mathfrak{g}(z_j)$, $|f(z)|\leq M$ for $z\in\Gamma$, and $\sum_{j=1}^{m}|w_j|\leq 2\pi L$. Then,
	\begin{equation*}
	\left\|f(\bA)\bb-f_{k,m}(\bA)\bb\right\|_2\leq \frac{1}{2\pi}\|\bV\|_2\left\|\bV^{-1}\right\|_2\|\bb\|_2\max_{\ell\in\{1,\ldots,n\}}|\rho_m(\lambda_\ell)|+LM\max_{j\in\{1,\ldots,m\}}E_k(z_j),
	\end{equation*}
	where $\bA$ is diagonalized as $\bA=\bV\bLambda\bV^{-1}$, $\bLambda = {\rm diag}(\lambda_1,\ldots,\lambda_n)$, and
 \begin{equation*}
\rho_m(x)=\int_\Gamma \frac{f(z)}{z-x}\df z-\sum_{j=1}^m \frac{f(z_j)}{z_j-x}w_j.
\end{equation*}
\end{theorem}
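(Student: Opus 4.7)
The plan is to decompose the error cleanly into (i) a quadrature error that involves only the exact resolvent of $\bA$, and (ii) a truncation error that is a weighted sum of the shifted-solve errors at the quadrature nodes, each of which is already controlled by Theorem~\ref{thm:lin_error}. Concretely, insert and subtract $\frac{1}{2\pi\im}\sum_{j=1}^m f(z_j)w_j(z_j\bI-\bA)^{-1}\bb$, and write
\begin{equation*}
f(\bA)\bb-f_{k,m}(\bA)\bb = \mathcal E_{\mathrm{quad}} + \mathcal E_{\mathrm{trunc}},
\end{equation*}
where
\begin{equation*}
\mathcal E_{\mathrm{quad}} = \frac{1}{2\pi\im}\int_\Gamma f(z)(z\bI-\bA)^{-1}\bb\,\df z-\frac{1}{2\pi\im}\sum_{j=1}^m f(z_j)w_j(z_j\bI-\bA)^{-1}\bb,
\end{equation*}
and $\mathcal E_{\mathrm{trunc}}$ is the sum over $j$ of $\frac{1}{2\pi\im}f(z_j)w_j$ times the difference between the exact $(z_j\bI-\bA)^{-1}\bb$ and the truncated expansion $-\sum_{\ell=0}^{k-1}\mathcal S_\Sigma[p_\ell w](z_j)p_\ell(\bA)\bb$ appearing in \eqref{mat_func_iter}.

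For $\mathcal E_{\mathrm{quad}}$, I would use the diagonalization $\bA=\bV\bLambda\bV^{-1}$ to pull $\bV$ and $\bV^{-1}$ outside of both the integral and the finite sum by linearity. The resulting inner expression is a diagonal matrix whose $\ell$-th diagonal entry is precisely $\rho_m(\lambda_\ell)$. Submultiplicativity of the $2$-norm then gives
\begin{equation*}
\|\mathcal E_{\mathrm{quad}}\|_2\le \frac{1}{2\pi}\|\bV\|_2\|\bV^{-1}\|_2\|\bb\|_2\max_{\ell\in\{1,\ldots,n\}}|\rho_m(\lambda_\ell)|,
\end{equation*}
which is the first term in the claimed bound. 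The genericity assumption ensures $\bV$ exists, and since the eigenvalues $\lambda_\ell$ lie inside $\Gamma$, $\rho_m(\lambda_\ell)$ is well-defined.

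For $\mathcal E_{\mathrm{trunc}}$, note that $(z_j\bI-\bA)^{-1}=-(\bA-z_j\bI)^{-1}$, so the bracketed vector at node $z_j$ equals, up to sign, the Akhiezer iteration residual studied in Theorem~\ref{thm:lin_error}; its $2$-norm is exactly $E_k(z_j)$. Applying the triangle inequality and bounding $|f(z_j)|\le M$ yields
\begin{equation*}
\|\mathcal E_{\mathrm{trunc}}\|_2\le \frac{1}{2\pi}\sum_{j=1}^m |f(z_j)|\,|w_j|\,E_k(z_j)\le \frac{M}{2\pi}\left(\sum_{j=1}^m|w_j|\right)\max_{j\in\{1,\ldots,m\}}E_k(z_j)\le LM\max_{j\in\{1,\ldots,m\}}E_k(z_j),
\end{equation*}
using the assumption $\sum_j|w_j|\le 2\pi L$. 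The hypothesis $\re\mathfrak g(\lambda_\ell)<\min_j\re\mathfrak g(z_j)$ guarantees that Theorem~\ref{thm:lin_error} applies at every $z_j$, so each $E_k(z_j)$ is finite and in fact geometrically small. Summing the two bounds gives the stated result.

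I do not anticipate a genuine obstacle: the argument is a standard quadrature-plus-iteration splitting. The only care needed is bookkeeping the $2\pi\im$ factors and the sign convention between $(z\bI-\bA)^{-1}$ and $(\bA-z\bI)^{-1}$ so that the residual in $\mathcal E_{\mathrm{trunc}}$ is correctly identified with the quantity $E_k(z_j)$ defined in Theorem~\ref{thm:lin_error}, and verifying that pulling $\bV^{\pm1}$ out of the contour integral is justified (which follows from continuity of the integrand on $\Gamma$ and the finiteness of the sum).
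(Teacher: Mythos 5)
Your proposal is correct and follows essentially the same route as the paper: the same triangle-inequality split into a quadrature error (bounded via diagonalization by $\frac{1}{2\pi}\|\bV\|_2\|\bV^{-1}\|_2\|\bb\|_2\max_\ell|\rho_m(\lambda_\ell)|$) and a truncation error (bounded termwise by $|f(z_j)||w_j|E_k(z_j)$ and then by $LM\max_j E_k(z_j)$). The sign and $2\pi\im$ bookkeeping you flag is exactly the only care the paper's argument requires as well.
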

\begin{proof}
By the triangle inequality, 
\begin{equation*}
\begin{aligned}
&\left\|f(\bA)\bb-f_{k,m}(\bA)\bb\right\|_2\leq\left\|f(\bA)\bb-\frac{1}{2\pi\im}\sum_{j=1}^m f(z_j)w_j(z_j\bI-\bA)^{-1}\bb\right\|_2\\&+\left\|\frac{1}{2\pi\im}\sum_{j=1}^m f(z_j)w_j(z_j\bI-\bA)^{-1}\bb-f_{k,m}(\bA)\bb\right\|_2.
\end{aligned}
\end{equation*}
Then,
\begin{equation*}
\begin{aligned}
&\left\|f(\bA)\bb-\frac{1}{2\pi\im}\sum_{j=1}^m f(z_j)w_j(z_j\bI-\bA)^{-1}\bb\right\|_2\\&=
\left\|\frac{1}{2\pi\im}\bV^{-1}\int_\Gamma f(z)(z\bI-\bLambda)^{-1}\bV\bb\df z-\frac{1}{2\pi\im}\bV^{-1}\sum_{j=1}^m f(z_j)w_j(z_j\bI-\bLambda)^{-1}\bV \bb \right\|_2\\&\leq\frac{1}{2\pi}\|\bV\|_2\left\|\bV^{-1}\right\|_2\|\bb\|_2\max_{\ell\in\{1,\ldots,n\}}|\rho_m(\lambda_\ell)|.
\end{aligned}
\end{equation*}
Then, in the notation of Theorem \ref{thm:lin_error},
\begin{equation*}
\begin{aligned}
&\left\|\frac{1}{2\pi\im}\sum_{j=1}^m f(z_j)w_j(z_j\bI-\bA)^{-1}\bb-f_{k,m}(\bA)\bb\right\|_2\\&\leq \frac{1}{2\pi}\sum_{j=1}^m\left\| f(z_j)w_j(z_j\bI-\bA)^{-1}\bb+f(z_j)w_j\sum_{\ell=0}^{k-1}\mathcal{S}_\Sigma\left[p_\ell w\right](z_j)p_\ell(\bA)\bb \right\|_2\\&=
\frac{1}{2\pi}\sum_{j=1}^m |f(z_j)||w_j|E_k(z_j)\leq LM\max_{j\in\{1,\ldots,m\}}E_k(z_j),
\end{aligned}
\end{equation*}
and the result follows.
\end{proof}
For a reasonable quadrature rule, $\lim_{m\to\infty}\rho_m(x)=0$ for all $x$, likely exponentially, so provided that $m$ is taken to be sufficiently large, the Akhiezer iteration converges, at worst, at an asymptotic  geometric rate given by $\ex^{\max_j\nu(z_j;\bA)}$ provided that $\bA$ is generic and its eigenvalues $\lambda_\ell$ all satisfy $\re\mathfrak g(\lambda_\ell)<\min_{j=1,\ldots,m}\re \mathfrak{g}(z_j)$. In practice, we find that this bound is quite pessimistic, so we offer an alternative view. We first state a lemma that generalizes estimates for the Chebyshev series for functions analytic in a Bernstein ellipse.
\begin{lemma}\label{bernstein}
Let $\Gamma_\varrho$ denote the level curve $\ex^{\re \mathfrak g(z)}=\varrho$ for $\varrho>1$, and assume that $f$ is analytic in the interior of $\Gamma_\varrho$ and satisfies $|f(z)|\leq M$ for $z$ interior to $\Gamma_\varrho$. Then, there exists a constant $C>0$, independent of $f$, such that for all $\ell\in\mathbb{N}$,
\begin{equation*}
\left|\langle f,p_\ell\rangle_{L^2_w(\Sigma)}\right|\leq CM\varrho^{-\ell}.
\end{equation*}
\end{lemma}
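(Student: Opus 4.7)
The plan follows the classical Bernstein argument, replacing Chebyshev asymptotics with the uniform Cauchy-integral decay supplied by Lemma~\ref{poly_asymp}. Since $p_\ell$ and $w$ are real-valued on $\Sigma$, the conjugate in the inner product drops out. For any $\varrho' \in (1,\varrho)$, the level curve $\Gamma_{\varrho'}$ lies in the interior of $\Gamma_\varrho$ (where $f$ is analytic and bounded by $M$) and encloses $\Sigma$ (since $\re \mathfrak{g}=0 < \log\varrho'$ on $\Sigma$), so Cauchy's integral formula yields
\begin{equation*}
\langle f, p_\ell\rangle_{L^2_w(\Sigma)} = \int_\Sigma \left( \frac{1}{2\pi\im}\oint_{\Gamma_{\varrho'}} \frac{f(z)}{z-x}\,\df z\right) p_\ell(x)w(x)\,\df x.
\end{equation*}

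Next, I would swap the order of integration by Fubini (both domains are compact and the integrand is jointly continuous) and use the definition of $\mathcal{C}_\Sigma$ to obtain
\begin{equation*}
\langle f, p_\ell\rangle_{L^2_w(\Sigma)} = -\oint_{\Gamma_{\varrho'}} f(z)\,\mathcal{C}_\Sigma[p_\ell w](z)\,\df z.
\end{equation*}
Since $\varrho' > 1$, the contour $\Gamma_{\varrho'}$ stays at positive distance from the endpoints of $\Sigma$, so Lemma~\ref{poly_asymp} supplies a uniform bound $B := \sup_{\ell,\, z\in\Gamma_{\varrho'}} |\delta_\ell(z)| < \infty$ with
\begin{equation*}
|\mathcal{C}_\Sigma[p_\ell w](z)| = \frac{|\delta_\ell(z)|}{2\pi}\ex^{-\ell\,\re\mathfrak{g}(z)} \leq \frac{B}{2\pi}(\varrho')^{-\ell}, \quad z \in \Gamma_{\varrho'}.
\end{equation*}
Combined with $|f(z)|\leq M$ on $\Gamma_{\varrho'}$, the ML-inequality then yields $|\langle f,p_\ell\rangle_{L^2_w(\Sigma)}| \leq \tfrac{|\Gamma_{\varrho'}|\,B\,M}{2\pi}(\varrho')^{-\ell}$.

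To promote this to the desired bound $CM\varrho^{-\ell}$, I would let $\varrho' \nearrow \varrho$, using that the arclength $|\Gamma_{\varrho'}|$ depends continuously on $\varrho'$ (since $\ex^{\mathfrak{g}}$ is a conformal map from $\mathbb{C}\setminus\Sigma$ onto the exterior of the unit disk, so $\Gamma_{\varrho'}$ is the preimage of $\{|\zeta|=\varrho'\}$) and that $B$ remains uniformly bounded in the limit. The main subtlety is the multi-interval bookkeeping: for $\varrho$ near $1$, the level set may split into several simple closed components, one encircling each subinterval of $\Sigma$. This poses no genuine difficulty, since Cauchy's formula and Lemma~\ref{poly_asymp} apply componentwise; summing the resulting estimates over at most $g+1$ components yields a bound of the same shape, with the constant $C$ depending only on $\Sigma$, $w$, and $\varrho$, and in particular independent of $f$.
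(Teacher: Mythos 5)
Your proposal is correct and follows essentially the same route as the paper's proof: Cauchy's integral formula on a slightly smaller level curve $\Gamma_{\varrho'}$, an exchange of the order of integration to produce $-\int_{\Gamma_{\varrho'}} f(z)\,\mathcal{C}_\Sigma[p_\ell w](z)\,\df z$, the uniform bound on $\delta_\ell$ from Lemma~\ref{poly_asymp}, and then letting $\varrho'\to\varrho$. Your added care about the uniformity of the constants as $\varrho'\nearrow\varrho$ and about possibly disconnected level sets only makes explicit what the paper leaves implicit.
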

\begin{proof}
Define $\Omega_\eta=\{z\in\compl~:~\varrho^{\eta}\leq\ex^{\re \mathfrak g(z)}\leq \varrho\}$ for some $0<\eta<1$. Choose $\epsilon$ in Lemma \ref{poly_asymp} to be sufficiently small such that $V\in\Omega_\eta^c$, and denote the uniform bound on $\left|\delta_\ell(z)\right|$ by $D$. Let $\varrho'$ satisfy $1<\varrho^{\eta}<\varrho'<\varrho$, and consider the level curve $\Gamma_{\varrho'}$. Then, by Cauchy's integral formula and Lemma~\ref{poly_asymp},
\begin{equation*}
	\left|\langle f,p_\ell\rangle_{L^2_w(\Sigma)}\right|=\left|\int_\Sigma\frac{1}{2\pi\im}\int_{\Gamma_{\varrho'}}\frac{f(z)}{z-x}\df zp_\ell(x)w(x)\df x\right|=\left|-\int_{\Gamma_{\varrho'}}f(z)\mathcal{C}_\Sigma\left[p_\ell w\right](z)\df z\right|\leq\left|\Gamma_{\varrho}\right|M\frac{D}{2\pi}\varrho'^{-\ell},
\end{equation*}
using $C=\frac{1}{2\pi}\left|\Gamma_{\varrho}\right|D$. Since this inequality holds for all $\varrho' < \varrho$, it must hold for $\varrho$.
\end{proof}

\begin{theorem}\label{t:nearsing}
Under the assumptions of Lemma \ref{bernstein}, suppose that
\begin{align*}
    \sum_{\ell=0}^{\infty}\varrho^{-\ell}\|p_\ell(\bA)\bb\|_2 < \infty.
\end{align*}
Then, there exist $(\varepsilon_{m,\ell})$ satisfying $\left|\varepsilon_{m,\ell}\right|\leq\|\rho_m\|_\infty := \max_{x \in \Sigma} |\rho_m(x)|$ and a constant $C>0$ such that in any norm
\begin{equation*}
\left\|f(\bA)\bb-f_{k,m}(\bA)\bb+\sum_{\ell=0}^{k-1}\varepsilon_{m,\ell}p_\ell(\bA)\bb\right\|\leq CM\sum_{\ell=k}^{\infty}\varrho^{-\ell}\|p_\ell(\bA)\bb\|.
\end{equation*}
\end{theorem}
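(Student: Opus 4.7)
The plan is to expand $f$ in the orthonormal polynomial basis on $\Sigma$, split $f(\bA)\bb - f_{k,m}(\bA)\bb$ into a truncation piece (the tail of the series) and a coefficient-perturbation piece, and bound the truncation via Lemma~\ref{bernstein} while showing the coefficient perturbations are uniformly controlled by $\|\rho_m\|_\infty$.

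First, let $c_\ell := \langle f, p_\ell\rangle_{L^2_w(\Sigma)}$ and let $\tilde c_\ell := -\sum_{j=1}^m f(z_j)w_j\mathcal{C}_\Sigma[p_\ell w](z_j)$ denote the quadrature-based coefficient appearing in $f_{k,m}(\bA)\bb$, so that \eqref{mat_func_iter} reads $f_{k,m}(\bA)\bb = \sum_{\ell=0}^{k-1}\tilde c_\ell\,p_\ell(\bA)\bb$. Exchanging the finite sum with the integral defining $\mathcal{C}_\Sigma$ gives $\tilde c_\ell = \langle g_m, p_\ell\rangle_{L^2_w(\Sigma)}$ with $g_m(x) := \frac{1}{2\pi\im}\sum_{j=1}^m f(z_j)w_j/(z_j-x)$, which is the rational quadrature approximation to $f$ coming from Cauchy's integral formula. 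Since $f(x) - g_m(x) = \rho_m(x)/(2\pi\im)$ for $x \in \Sigma$, Cauchy--Schwarz against the probability measure $w\,\df x$ (using $\|p_\ell\|_{L^2_w(\Sigma)} = 1$ and $\int_\Sigma w\,\df x = 1$) yields
\[
|c_\ell - \tilde c_\ell| \leq \tfrac{1}{2\pi}\|\rho_m\|_\infty \leq \|\rho_m\|_\infty,
\]
so $\varepsilon_{m,\ell} := \tilde c_\ell - c_\ell$ satisfies the required bound.

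Next I would establish the series identity $f(\bA)\bb = \sum_{\ell=0}^\infty c_\ell\,p_\ell(\bA)\bb$. Absolute convergence in any norm follows from the summability hypothesis together with $|c_\ell|\leq CM\varrho^{-\ell}$ from Lemma~\ref{bernstein}. To identify the limit with $f(\bA)\bb$, I would choose $\varrho' \in (1,\varrho)$ so that the spectrum of $\bA$ lies strictly inside the level curve $\Gamma_{\varrho'}$ --- the summability hypothesis forces this because, for generic $\bA$, $\|p_\ell(\bA)\bb\|$ grows on the order of $\ex^{\ell\nu(\bA)}$ --- and then apply the Cauchy integral formula $f(\bA)\bb = \frac{1}{2\pi\im}\int_{\Gamma_{\varrho'}} f(z)(z\bI - \bA)^{-1}\bb\,\df z$. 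Substituting the resolvent expansion $(z\bI-\bA)^{-1}\bb = -\sum_{\ell} \mathcal{S}_\Sigma[p_\ell w](z)\,p_\ell(\bA)\bb$, which converges absolutely and uniformly on $\Gamma_{\varrho'}$ by Lemma~\ref{poly_asymp}, and exchanging the sum and integral yields $f(\bA)\bb = \sum_\ell c_\ell\,p_\ell(\bA)\bb$, using the Cauchy-plus-Fubini identity $c_\ell = -\int_\Gamma f(z)\mathcal{C}_\Sigma[p_\ell w](z)\,\df z$ that already appears in the proof of Lemma~\ref{bernstein}.

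Assembly is then immediate:
\[
f(\bA)\bb - f_{k,m}(\bA)\bb + \sum_{\ell=0}^{k-1}\varepsilon_{m,\ell}\,p_\ell(\bA)\bb = \sum_{\ell=0}^\infty c_\ell\,p_\ell(\bA)\bb - \sum_{\ell=0}^{k-1}c_\ell\,p_\ell(\bA)\bb = \sum_{\ell=k}^\infty c_\ell\,p_\ell(\bA)\bb,
\]
and the triangle inequality combined with $|c_\ell|\leq CM\varrho^{-\ell}$ delivers the stated bound. I expect the subtlest step to be justifying the series representation of $f(\bA)\bb$ for a possibly non-normal $\bA$: the Cauchy-integral-with-resolvent-expansion route above sidesteps any spectral calculus beyond what is already employed in \eqref{eq:iteration}, but one must check carefully that the summability hypothesis really does place the spectrum of $\bA$ strictly inside $\Gamma_{\varrho}$ so that the resolvent series and the integral swap are both valid.
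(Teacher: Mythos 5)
Your proposal is correct and follows essentially the same route as the paper: identify the quadrature coefficient $-\sum_j f(z_j)w_j\mathcal{C}_\Sigma[p_\ell w](z_j)$ as $\langle f_m,p_\ell\rangle_{L^2_w(\Sigma)}$, bound $\varepsilon_{m,\ell}=\langle f_m-f,p_\ell\rangle_{L^2_w(\Sigma)}$ by Cauchy--Schwarz, telescope against the exact series $\sum_\ell \langle f,p_\ell\rangle_{L^2_w(\Sigma)}p_\ell(\bA)\bb$, and finish with Lemma~\ref{bernstein}. The one place you go beyond the paper is in justifying the identity $f(\bA)\bb=\sum_\ell\langle f,p_\ell\rangle_{L^2_w(\Sigma)}p_\ell(\bA)\bb$ via the resolvent expansion over a level curve $\Gamma_{\varrho'}$ --- the paper simply asserts this step --- and your caution there is warranted, since the summability hypothesis controls only the component of the spectrum seen by $\bb$ rather than the full spectrum.
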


\begin{proof}
For a given $\ell$,
\begin{equation*}
	-\sum_{j=1}^mf(z_j)w_j\mathcal{C}_\Sigma\left[p_\ell w\right](z_j)=\int_\Sigma p_\ell(x)\frac{1}{2\pi\im}\sum_{j=1}^m\frac{f(z_j)w_j}{z_j-x}w(x)\df x=\langle f_m,p_\ell\rangle_{L^2_w(\Sigma)},
\end{equation*}
where 
\begin{equation*}
	f_m(x) = \frac{1}{2\pi\im}\sum_{j=1}^m\frac{f(z_j)w_j}{z_j-x}\approx f(x).
\end{equation*}
Thus,
\begin{equation*} f_{k,m}=\sum_{\ell=0}^{k-1}\langle f_m,p_\ell\rangle_{L^2_w(\Sigma)}p_\ell.
\end{equation*}
Let $\varepsilon_{m,\ell}=\langle f_m-f,p_\ell\rangle_{L^2_w(\Sigma)}$. By the Cauchy--Schwarz inequality,
\begin{equation*}
\left|\varepsilon_{m,\ell}\right|\leq \|f_m-f\|_{L^2_w(\Sigma)}\leq\|\rho_m\|_\infty.
\end{equation*}
Then, 
\begin{equation*}
f(\bA)\bb-f_{k,m}(\bA)\bb+\sum_{\ell=0}^{k-1}\varepsilon_{m,\ell}p_\ell(\bA)\bb=\sum_{\ell=k}^{\infty}\langle f,p_\ell\rangle_{L^2_w(\Sigma)}p_\ell(\bA)\bb.
\end{equation*}
By the triangle inequality and Lemma \ref{bernstein}, there exists some $C>0$ such that
\begin{equation*}
\left\|f(\bA)\bb-f_{k,m}(\bA)\bb+\sum_{\ell=0}^{k-1}\varepsilon_{m,\ell}p_\ell(\bA)\bb\right\|\leq CM\sum_{\ell=k}^{\infty}\varrho^{-\ell}\|p_\ell(\bA)\bb\|.
\end{equation*}
\end{proof}

The bound from Theorem~\ref{t:nearsing} implies that when $\bA$ is diagonalized as $\bA = \bV \bLambda \bV^{-1}$,
\begin{align*}
 \|f(\bA)\bb - f_{k,m}(\bA)\bb\|_2 \leq \|\bV\|_2 \|\bV^{-1}\|_2 \| \bb \|_2 \left[  \|\rho_m\|_\infty \sum_{\ell = 0}^{k-1} \|p_\ell (\bLambda)\|_2 + C M \sum_{\ell = k}^\infty \varrho^{-\ell} \|p_\ell (\bLambda)\|_2 \right].
\end{align*}
This demonstrates that if the quadrature rule is taken such that $\|\rho_m\|_\infty$ is sufficiently small, then the convergence rate will be largely determined by the distance from $\Sigma$ to the nearest singularity of $f$.  More precisely, $\|f(\bA)\bb-f_{k,m}(\bA)\bb\| = \OO(\varrho^{-k} + \|\rho_m\|_\infty)$ when the spectrum of $\bA$ is contained in $\Sigma$, and the rate of convergence will be governed by the furthest level curve $\Gamma_\varrho$ to which $f$ can be analytically extended. If $f$ is entire, the initial convergence is superexponential. The level curves of Lemma \ref{bernstein} can be thought of as higher genus analogs to Bernstein ellipses as they dictate the convergence rate in the same manner. A contour plot of $\ex^{\re \mathfrak g(z)}$ is included as part of Figure \ref{error_and_contour}.

Because Theorem~\ref{thm:func_error} gives a different bound, we have, supposing that $\Gamma$ lies inside $\Gamma_\varrho$,
\begin{align*}
    \frac{\|f(\bA)\bb - f_{k,m}(\bA)\bb\|_2}{\|\bV\|_2 \|\bV^{-1}\|_2 \| \bb \|_2} &\leq \min\left\{  \|\rho_m\|_\infty \sum_{\ell = 0}^{k-1} \|p_\ell (\bLambda)\|_2 + C M \sum_{\ell = k}^\infty \varrho^{-\ell} \|p_\ell (\bLambda)\|_2,\right. \\
    &\left. \qquad \frac{1}{2\pi}\|\rho_m(\bLambda)\|_2 +cLM\max_{j\in\{1,\ldots,m\}}\frac{\ex^{k\nu(z_j;\bA)}}{1-\ex^{\nu(z_j;\bA)}},\right\}\\
    &\leq \min\left\{  c' \|\rho_m\|_\infty \sum_{\ell=0}^{k-1} \ex^{\ell\nu(\bA)} + c' C M \frac{\ex^{k \nu(z_*;\bA)}}{1 - \ex^{\nu(z_*;\bA)}},\right. \\
    &\left. \qquad \frac{1}{2\pi}\|\rho_m(\bLambda)\|_2 +cLM\max_{j\in\{1,\ldots,m\}}\frac{\ex^{k\nu(z_j;\bA)}}{1-\ex^{\nu(z_j;\bA)}},\right\},
\end{align*}
for another constant $c'> 0$ by Lemma~\ref{poly_asymp}.  Here $z_*$ is any point on $\Gamma_\varrho$.

If $\bA$ is generic and the spectrum of $\bA$ is contained in $\Sigma$, then $\nu(\bA) = 0$ ($\|p_\ell(\bA)\|_2 = O(1)$) and the first term inside the minimum is likely to be smaller than the second since the nodes $z_j$ are inside $\Gamma_\varrho$, but, eventually, the first term will dominate the second when $\sum_{\ell = 0}^{k-1} \|p_\ell(\bLambda)\|_2$ becomes large. The second term then implies that the errors remain small, even if this sum is large.  Also, if $\bA$ is generic but the eigenvalues of $\bA$ are not contained in $\Sigma$, the second term here will still be small, giving useful bounds.  The first term will also likely be small, but as $\nu(\bA) > 0$, the sum grows exponentially with respect to $k$. In this case, the balance, i.e., which term gives a better bound in a given situation, will change. 

\subsection{Complexity}

To run Algorithm~\ref{alg:akh_func} for $k$ steps, using that computing $a_j, b_j, S_j$ requires $\OO(1)$ arithmetic operations and supposing that $T_\bA$ is the complexity of matrix multiplication by $\bA$, we need less than 
\begin{align*}
k T_\bA + 7 k n + k m + \OO((1 + m)k)
\end{align*}
arithmetic operations if $\bA \in \mathbb C^{n \times n}$.  Here the $\OO((1+m)k)$ term represents the computation of $a_j, b_j, S_j$ for $j = 0,1,\ldots k$. The complexity of the first Akhiezer iteration, Algorithm~\ref{alg:akh_iter}, is found by setting $m = 0$.  Furthermore, if a tolerance of $\epsilon$ is desired for function approximation, we expect to set $m = \OO(\log \epsilon^{-1}).$  This is in comparison with the method in \cite{SaadMatrixFunction} where the complexity is 
\begin{align*}
    k T_\bA + \OO(k^2 m  + n),
\end{align*}
and $m = \OO(\epsilon^{2/7})$ is required, in general, to achieve a tolerance of $\epsilon$.


\section{Adaptivity}\label{sect:adapt}
In the case where the matrix $\bA$ is symmetric, i.e., it has all real eigenvalues, the asymptotic behavior of the polynomials yields an adaptive algorithm that estimates a good choice for the bands $\Sigma=\bigcup_{j=1}^{g+1}[a_j,b_j]$ then applies the iteration \eqref{eq:iteration} to solve the system $\bA\bx=\bb$. For simplicity, we will consider only the $g=1$ case $\Sigma=[a_1,b_1]\cup[a_2,b_2]$, but such an approach can be applied to the $g>1$ cases as well. Furthermore, such an approach could be used as a first step in computing a matrix function with the first step being producing an estimate of the spectrum of $\bA$.

By Lemma \ref{poly_asymp}, if not all eigenvalues of $\bA$ are contained in $\Sigma$, the matrix polynomials $p_j(\bA)$, and therefore $\|p_j(\bA)\bb\|$ as well, grow at an asymptotic rate given by $\ex^{\max_k\re\mathfrak g(\lambda_k)}$, where $\lambda_1,\ldots,\lambda_n$ denote the eigenvalues of $\bA$. Thus, if the approximate growth rate is given by $r$, then performing root finding on $\ex^{\re g(z)}-r$ around each endpoint of $\Sigma$ will yield a new set of bands that contain the eigenvalues of $\bA$. A naive implementation of this is given as Algorithm \ref{alg:adapt}.
\begin{algorithm}
	\caption{Adaptive band selection}\label{alg:adapt}
	\textbf{Input: }{Initial guess for bands $\Sigma=[a_1,b_1]\cup[a_2,b_2]$ where $b_1<0<a_2$, $\bA\in\compl^{n\times n}$, $\bA = \bA^*$, $\bb\in\compl^n$, parameters $\gamma_o>1$, $0<\gamma_i<1$.}\\
	\While{\emph{Norms of matrix polynomials $\|p_j(\bA)\bb\|$ grow as $j\to\infty$}}{
		Compute the growth rate $r$ of $\|p_j(\bA)\bb\|$ as $j\to\infty$.\\
		Set $f(x)=\ex^{\re \mathfrak g(x)}-r$ for $\mathfrak g$ corresponding to $[a_1,b_1]\cup[a_2,b_2]$.\\
		Perform bisection on $f$ on $[\gamma_oa_1,a_1]$. If bisection fails, set $a_1\gets\gamma_oa_1$; otherwise set $a_1\gets c$ where $c$ is the computed root.\\
		Perform bisection on $f$ on $[b_1,\gamma_ib_1]$. If bisection fails, set $b_1\gets\gamma_ib_1$; otherwise set $b_1\gets c$ where $c$ is the computed root.\\
		Perform bisection on $f$ on $[\gamma_ia_2,a_2]$. If bisection fails, set $a_2\gets\gamma_ia_2$; otherwise set $a_2\gets c$ where $c$ is the computed root.\\
		Perform bisection on $f$ on $[b_2,\gamma_ob_2]$. If bisection fails, set $b_2\gets\gamma_ob_2$; otherwise set $b_2\gets c$ where $c$ is the computed root.\\
		Compute norms of matrix polynomials $\|p_j(\bA)\bb\|$ for $p_0,p_1,\ldots$ corresponding to the updated bands $\Sigma=[a_1,b_1]\cup[a_2,b_2]$.
	}
	\Return{$\Sigma=[a_1,b_1]\cup[a_2,b_2]$}
\end{algorithm}
This algorithm ensures that the Akhiezer iteration on the outputted bands $\Sigma=[a_1,b_1]\cup[a_2,b_2]$ applied to $\bA\bx=\bb$ will converge at the rate $\ex^{-\re\mathfrak g(0)}$. In practice, computing the growth rate of the matrix polynomials can be challenging due to theta function oscillations implicit in the orthogonal polynomials. One way to do this is to consider the ratio
\begin{equation*}
	\frac{|p_{n+k}(z)|}{|p_n(z)|}=\frac{|\hat\delta_{n+k}(z)|}{|\hat\delta_{n}(z)|}\ex^{k\re\mathfrak g(z)},
\end{equation*}
which follows from Lemma \ref{poly_asymp}. Since $\hat\delta_n(z)=\OO(1)$, for sufficiently large $n$ and $k$,
\begin{equation*}
	r\approx\ex^{\re\mathfrak g(z)}\approx\sqrt[k]{\frac{|p_{n+k}(z)|}{|p_n(z)|}}.
\end{equation*}
Of course, what ``sufficiently large'' means in practice is not always clear, and one may need to experiment with different values of $n$ and $k$ for different matrices.

The factor $\ex^{-\re\mathfrak g(0)}$ typically increases as the sizes of the intervals increase, so the convergence rate will be suboptimal if the eigenvalues of $\bA$ are contained in some $\Sigma'\subsetneq\Sigma$. The convergence rate can be improved by modifying Algorithm \ref{alg:adapt} so that only one endpoint is moved at a time and the new growth rate is computed after each movement. If the growth rate decreases, the algorithm proceeds to the next endpoint. Otherwise, the endpoint is reverted to its previous value and the algorithm proceeds.

An improved band estimation technique can be employed if one is willing up to compute 4 (or more generally, $2g+2$) inner products. This is done by approximating the eigenvector corresponding to the eigenvalue with the most growth. In particular, one computes $\Tilde\by=p_n(\bA)\bb$ for a sufficiently large $n$ and normalizes $\by=\frac{\Tilde\by}{\|\Tilde{\by}\|}$ to avoid round-off errors. This is iterated as $\Tilde\by=p_n(\bA)\by$, $\by=\frac{\Tilde\by}{\|\Tilde{\by}\|}$ until convergence. Then, the eigenvalue corresponding to the largest growth rate is approximated by the Rayleigh quotient
\begin{equation*}
	\frac{\by^T\bA\by}{\by^T\by}.
\end{equation*}
The relevant endpoint is then moved to strictly contain this eigenvalue, and the algorithm iterates until all approximated eigenvalues fall within the bands. We find this approach to be the most accurate, and the eigenvalues are computed to machine precision, so the algorithm needs only 4 Rayleigh quotient computations to accurately approximate the true bands.

On the other hand, if one only cares that the iteration converges and is not concerned with the rate, a simpler approach can be applied. If one takes the initial guess for the bands to be the symmetric intervals $[-b,-a]\cup[a,b]$, convergence can be obtained without moving the interior endpoints. This follows from the fact that the iteration converges if $\re\mathfrak g(\lambda_k)<\re\mathfrak g(0)$ for all eigenvalues $\lambda_k$ of $\bA$ and that $\re\mathfrak g(z)$ is maximized on $[-a,a]$ at $z=0$. Thus, one only needs to adjust the parameter $b$ so that $\re\mathfrak g(\lambda_k)<\re\mathfrak g(0)$ for all eigenvalues such that $|\lambda_k|>b$.

\section{Examples and Applications}\label{sect:examples}
Since this work is inspired by that of Saad, we first consider an example presented in \cite[Section 8.1]{Saad}. Let $\bA\in\real^{200\times200}$ be a diagonal matrix with eigenvalues spaced uniformly in $[-2,-0.5]\cup[0.5,6]$, and let $\bb=\bA\be$ where $\be\in\real^{200}$ is the vector of all ones. We run the Akhiezer iteration using $g=1$ Akhiezer polynomials and plot the relative residuals in Figure \ref{saad_example}. We observe that this looks similar to the result in \cite{Saad}.
\begin{figure}
	\includegraphics[scale=0.5]{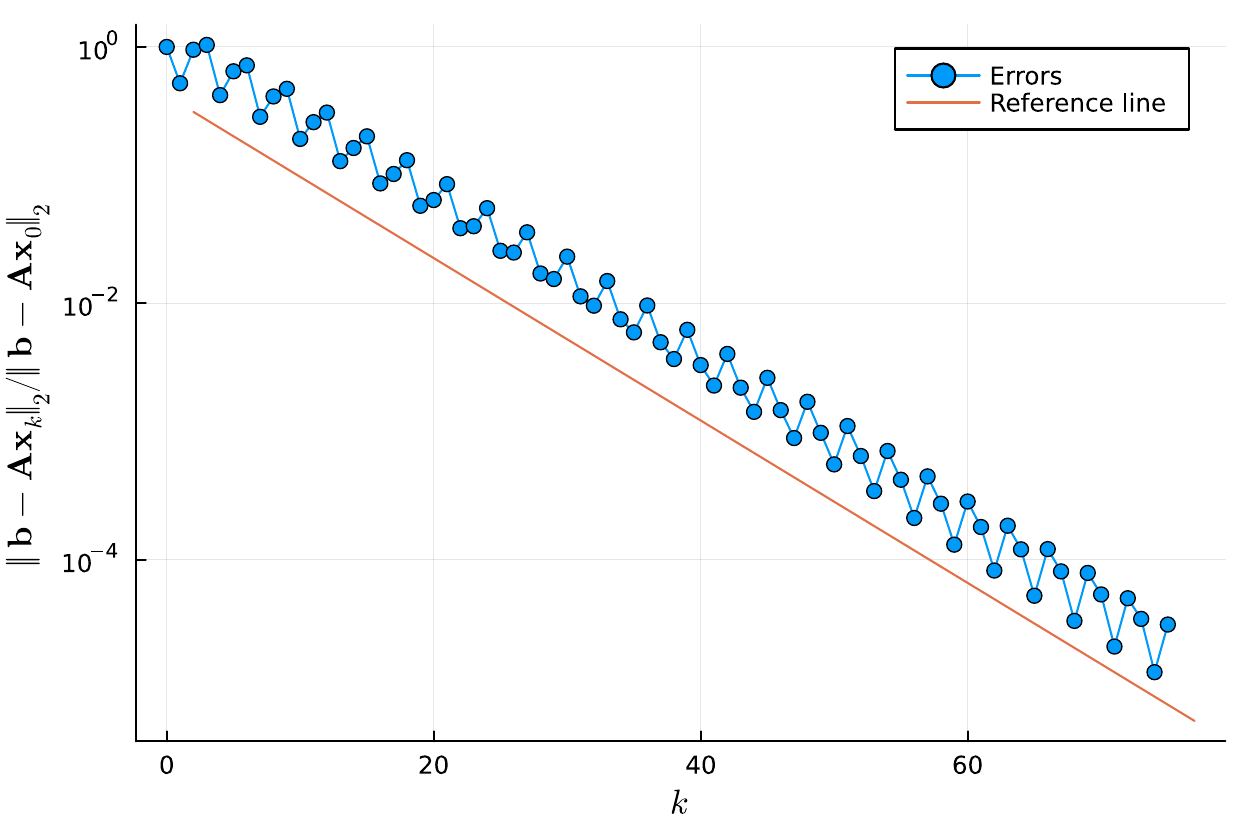}
	\caption{Relative residuals at each iteration of the Akhiezer iteration applied to a diagonal matrix $\bA\in\real^{200\times200}$ with eigenvalues spaced uniformly in $[-2,-0.5]\cup[0.5,6]$ and $\bb=\bA\be$. The rate of convergence is well-approximated by the reference line $C\ex^{-k\re \mathfrak{g}(0)}.$}
	\label{saad_example}
\end{figure}

To construct a more complicated example, we consider a $200\times200$ real matrix with randomly generated eigenvalues in $[-2,-0.5]\cup[0.5,6]$. We construct $\bA$ by moving the eigenvalues off the intervals by adding a Gaussian perturbation and removing the near-diagonal structure by conjugating by an orthogonal matrix. Again, we take $\bb=\bA\be$. In Figure \ref{error_and_contour}, we plot the eigenvalues of $\bA$ on a contour map of $\re\mathfrak g$ and the relative residuals for the first 370 iterations of our the Akhiezer iteration using $g=1$ Akhiezer polynomials.
\begin{figure}
	\centering
	\begin{subfigure}{0.495\linewidth}
		\centering
		\includegraphics[width=\linewidth]{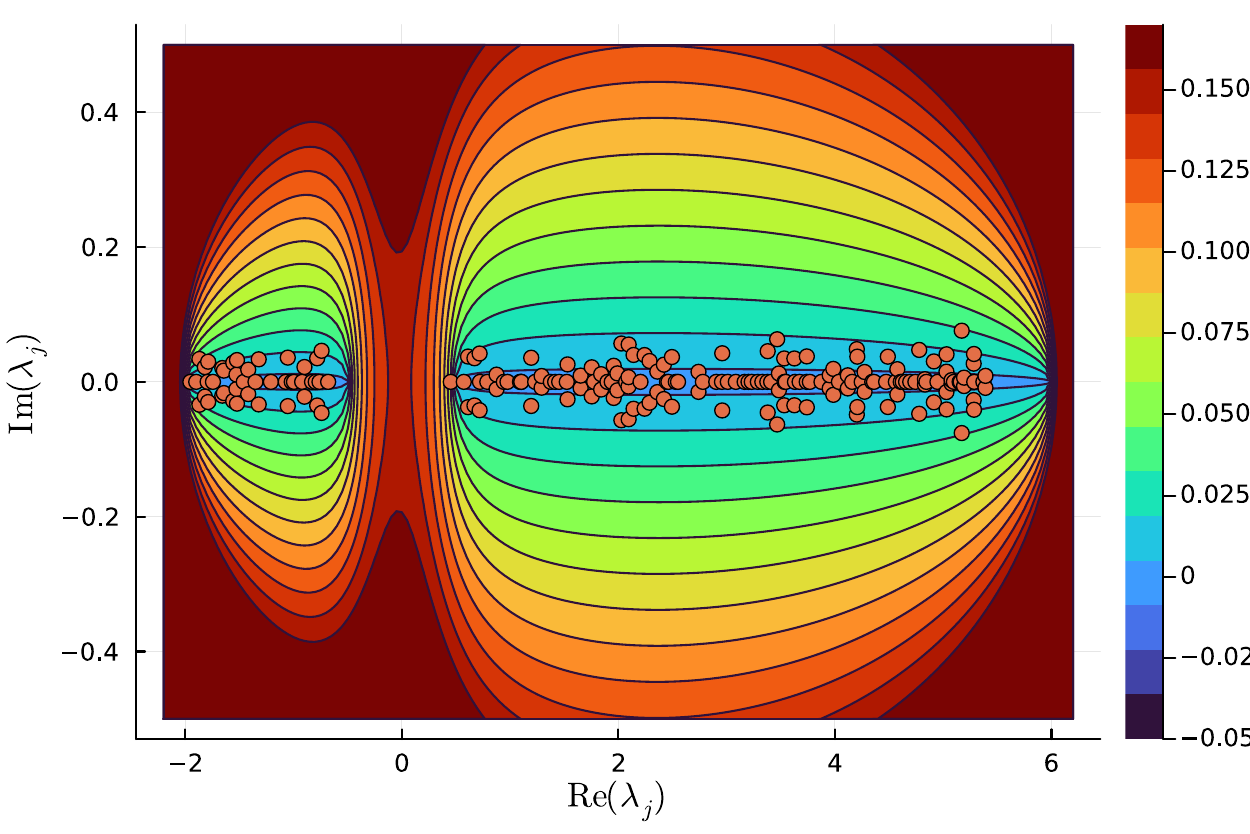}
	\end{subfigure}
	\begin{subfigure}{0.495\linewidth}
		\centering
		\includegraphics[width=\linewidth]{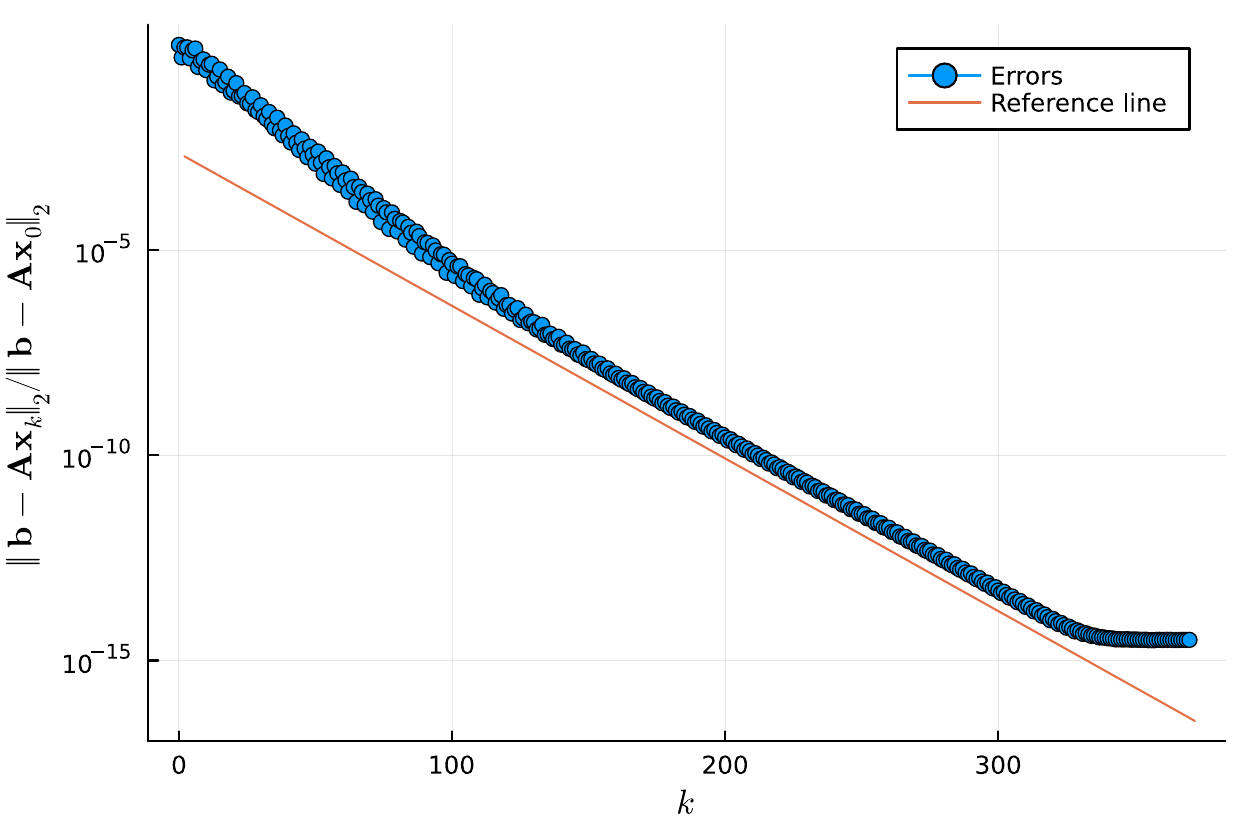}
	\end{subfigure}
	\caption{Eigenvalues of a matrix $\bA$ superimposed on a contour plot of $\re\mathfrak g(z)$ (left) and relative residuals at each iteration for the Akhiezer iteration applied to $\bA\bx=\bb:=\bA\be$. The asymptotic rate of convergence is well-approximated by the reference line $C\ex^{k\nu(0;\bA)}.$}
	\label{error_and_contour}
\end{figure}

To demonstrate the convergence of the Akhiezer iteration for matrix function approximation, we consider the same matrix $\bA$ as the previous example but let $\bb\in\real^{200}$ be a random Gaussian vector. We let the contour be two circles: one centered around each interval with diameter 1.15 times the length of the interval. We take 200 quadrature nodes divided between the two circles proportionally to their width. First considering the entire function $f(x)=\exp(x)$, we use \eqref{mat_func_iter} with $g=1$ Akhiezer polynomials to approximate $f(\bA)\bb$ and plot the 2-norm of the difference between this approximation and $f(\bA)\bb$ in Figure \ref{mat_func_conv_exp}. We observe superexponential convergence until the iteration saturates due to rounding errors. Instead, letting $f(x)=\tanh(x)$ and then $f(x)=\exp(x)/x$, we plot the same errors in at each iteration in Figures~\ref{mat_func_conv_sym} and \ref{mat_func_conv} for two different matrices $\bA$. We observe that this approach allows one to compute $f(\bA)\bb$ for functions $f$ that have singularities at zero, albeit at a slower rate than for functions with singularities farther away from $\Sigma$. Furthermore, the bound of Theorem \ref{t:nearsing} estimates the convergence rate accurately when the spectrum of $\bA$ is contained in $\Sigma$ and is pessimistic but close when this is not true.
\begin{figure}
	\centering
		\includegraphics[scale=0.5]{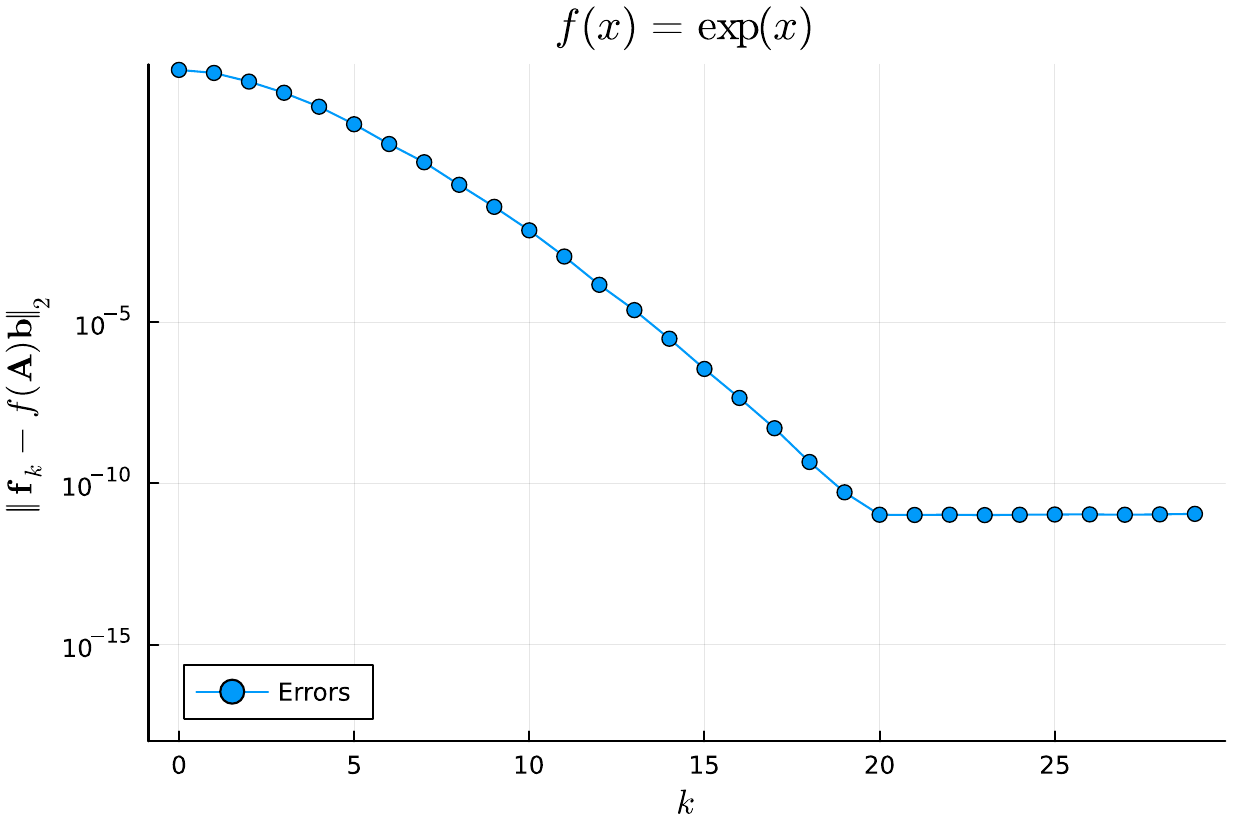}
	\caption{2-norm error at each iteration for the Akhiezer iteration approximation to $f(\bA)\bb$ where $f(x)=\exp(x)$, $\bA\in\real^{200\times200}$ is as in Figure \ref{error_and_contour}, and $\bb\in\real^{200}$ is a random Gaussian vector. The iteration converges superexponentially, saturating within 20 iterations.}
	\label{mat_func_conv_exp}
\end{figure}
\begin{figure}
	\centering
	\begin{subfigure}{0.495\linewidth}
		\centering
		\includegraphics[width=\linewidth]{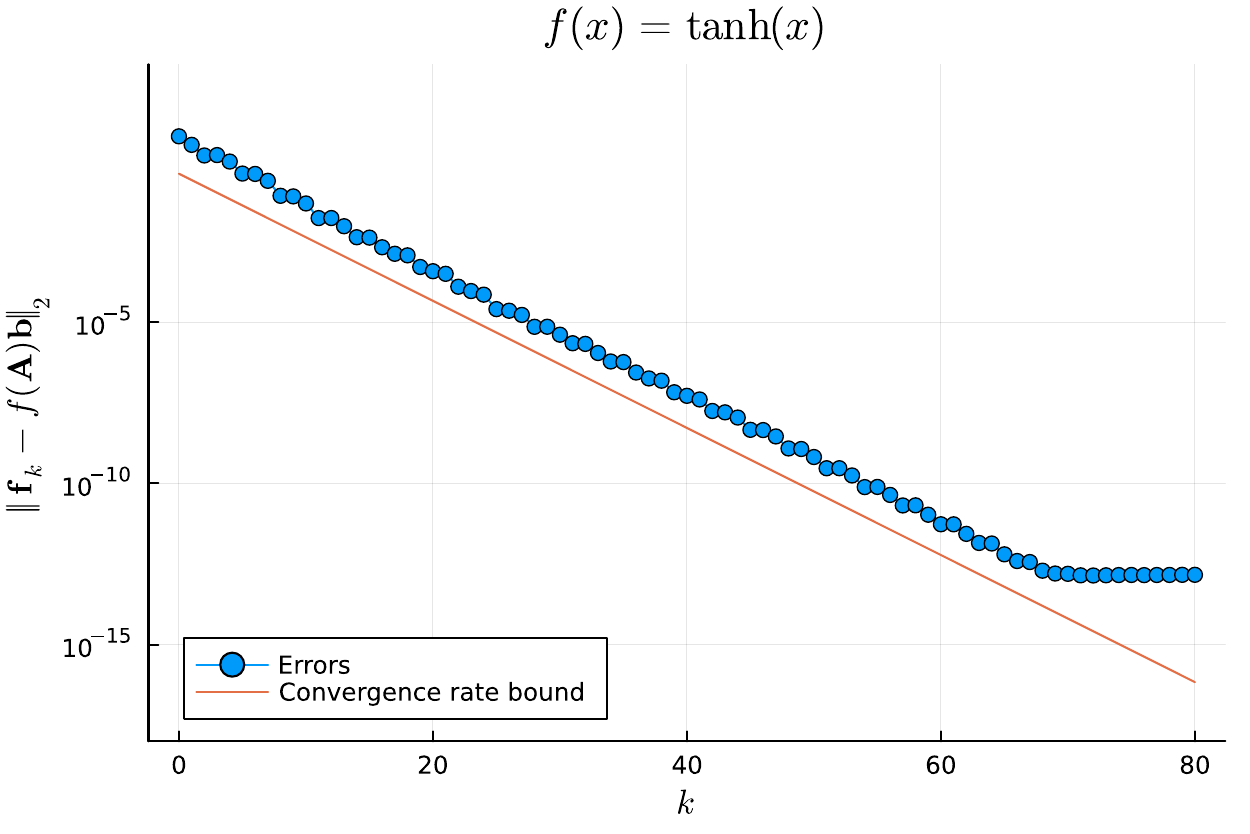}
	\end{subfigure}
	\begin{subfigure}{0.495\linewidth}
		\centering
		\includegraphics[width=\linewidth]{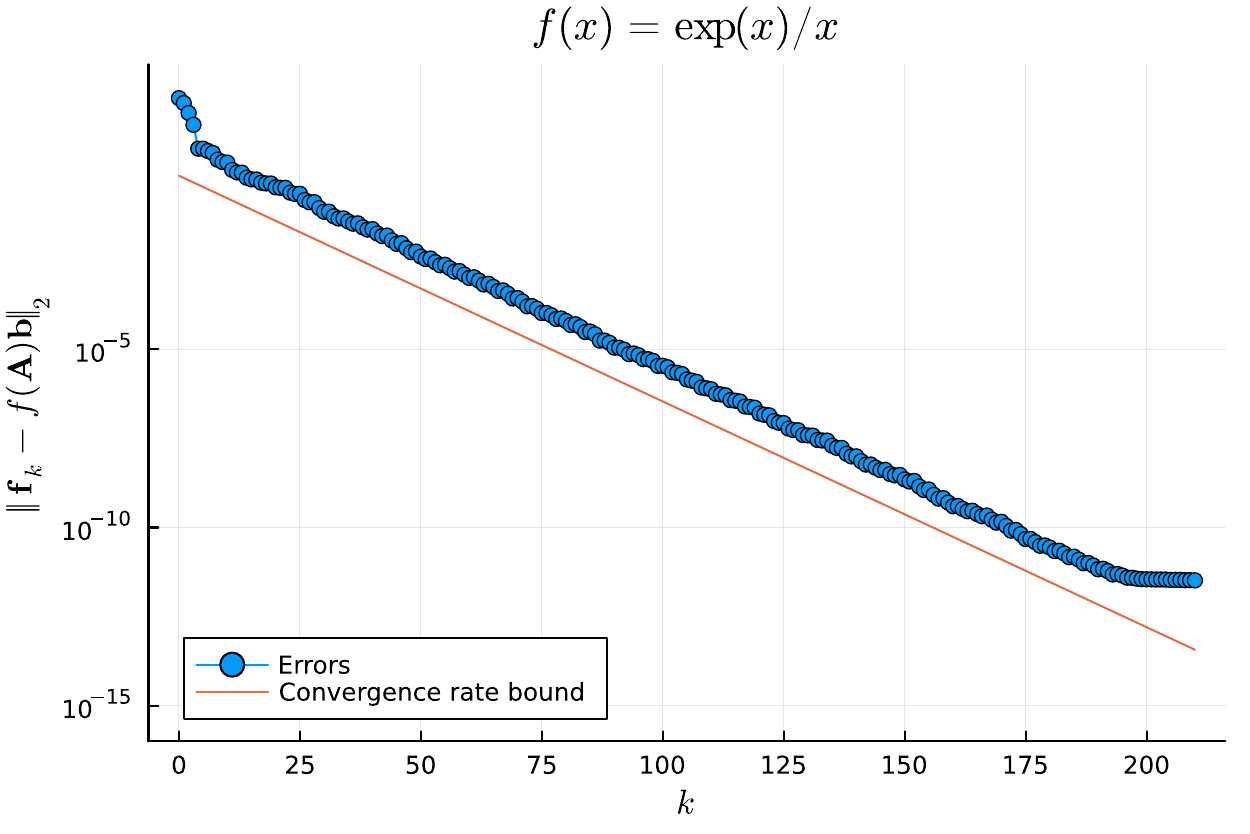}
	\end{subfigure}
	\caption{2-norm error at each iteration for the Akhiezer iteration approximation to $f(\bA)\bb$ where $f(x)=\tanh(x)$ (left) and $f(x)=\exp(x)/x$ (right), $\bA\in\real^{200\times200}$ is has randomly generated eigenvalues in $[-2,-0.5]\cup[0.5,6]$, and $\bb\in\real^{200}$ is a random Gaussian vector. The convergence rate bounds $C_1\ex^{k\nu(\im\pi/2;\bA)}$ (left) and $C_2\ex^{k\nu(0;\bA)}$ (right) approximate the true convergence rates.}
	\label{mat_func_conv_sym}
\end{figure}
\begin{figure}
	\centering
	\begin{subfigure}{0.495\linewidth}
		\centering
		\includegraphics[width=\linewidth]{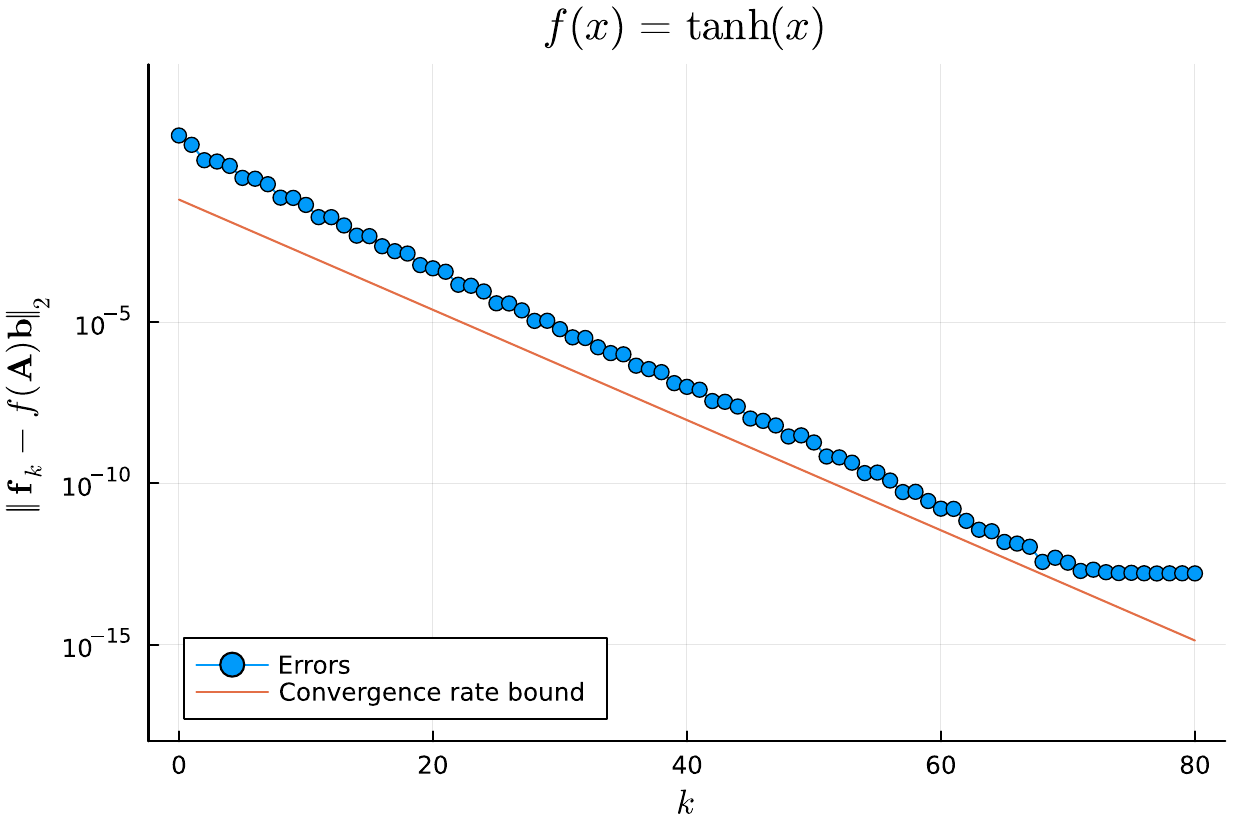}
	\end{subfigure}
	\begin{subfigure}{0.495\linewidth}
		\centering
		\includegraphics[width=\linewidth]{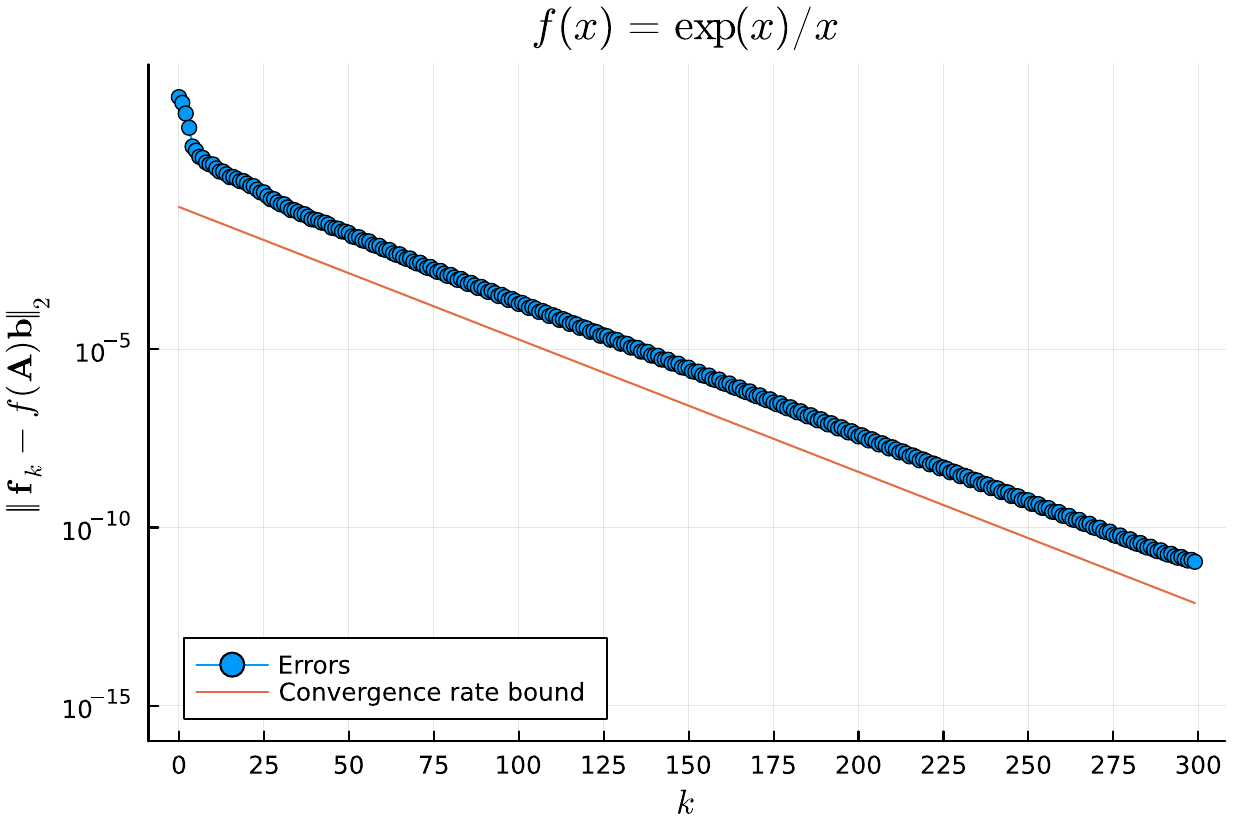}
	\end{subfigure}
	\caption{2-norm error at each iteration for the Akhiezer iteration approximation to $f(\bA)\bb$ where $f(x)=\tanh(x)$ (left) and $f(x)=\exp(x)/x$ (right), $\bA\in\real^{200\times200}$ is as in Figure \ref{error_and_contour}, and $\bb\in\real^{200}$ is a random Gaussian vector. The convergence rate bounds $C_1\ex^{k\nu(\im\pi/2;\bA)}$ (left) and $C_2\ex^{k\nu(0;\bA)}$ (right) approximate the true convergence rates.}
	\label{mat_func_conv}
\end{figure}

To illustrate the advantage of having more knowledge of the spectrum, we consider $\bA$ and $\bb$ constructed in the same manner as the previous example but with the original eigenvalues taken to be in $\Sigma=[-2,-0.5]\cup[0.5,0.7]\cup[5.8,6]$. In Figure \ref{2vs3}, we plot the convergence of the Akhiezer iteration using Akhiezer polynomials on $[-2,-0.5]\cup[0.5,6]$ and orthogonal polynomials with the weight \eqref{special_weight} on $\Sigma$. We find that the iteration converges to a relative residual of $10^{-10}$ in 177 iterations in the former case and 102 in the latter in spite of the unbounded behavior at the endpoints of orthogonal polynomials with the weight \eqref{special_weight}.
\begin{figure}
	\centering
	\begin{subfigure}{0.495\linewidth}
		\centering
		\includegraphics[width=\linewidth]{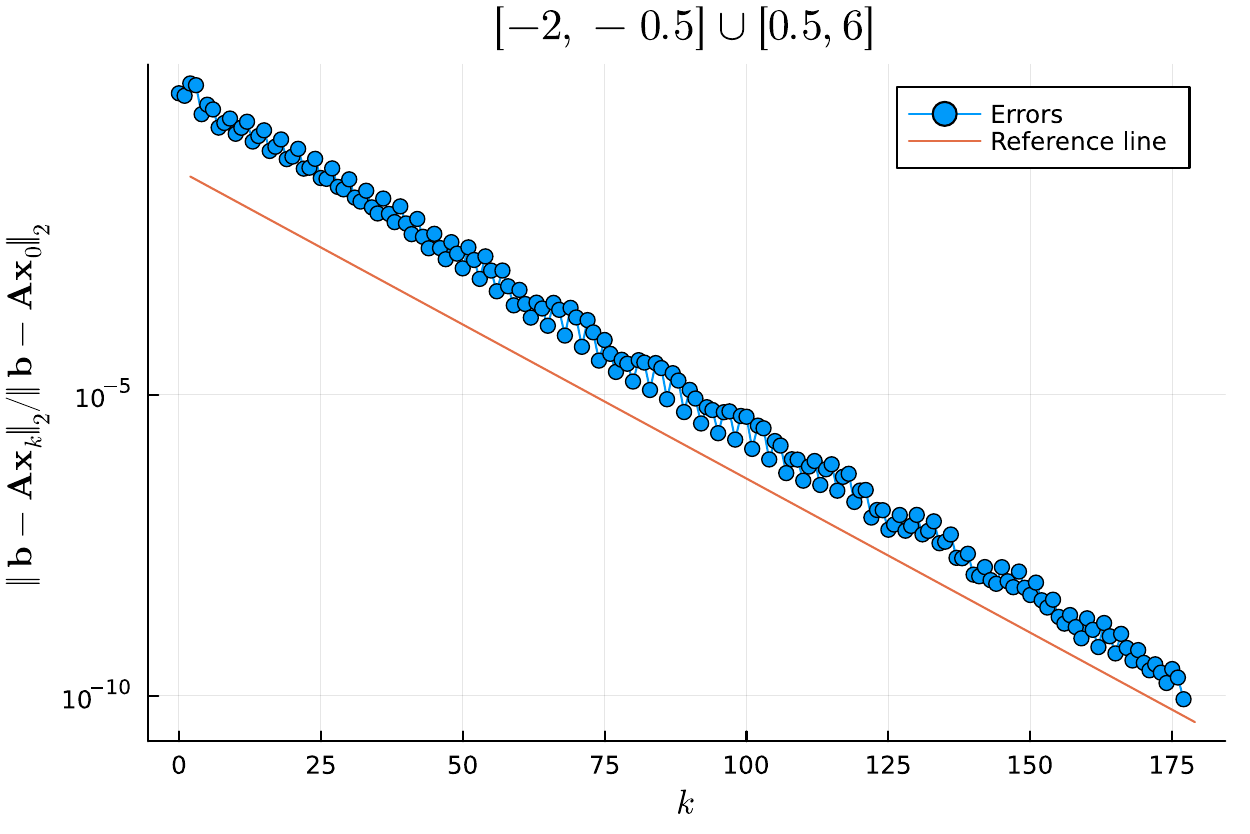}
	\end{subfigure}
	\begin{subfigure}{0.495\linewidth}
		\centering
		\includegraphics[width=\linewidth]{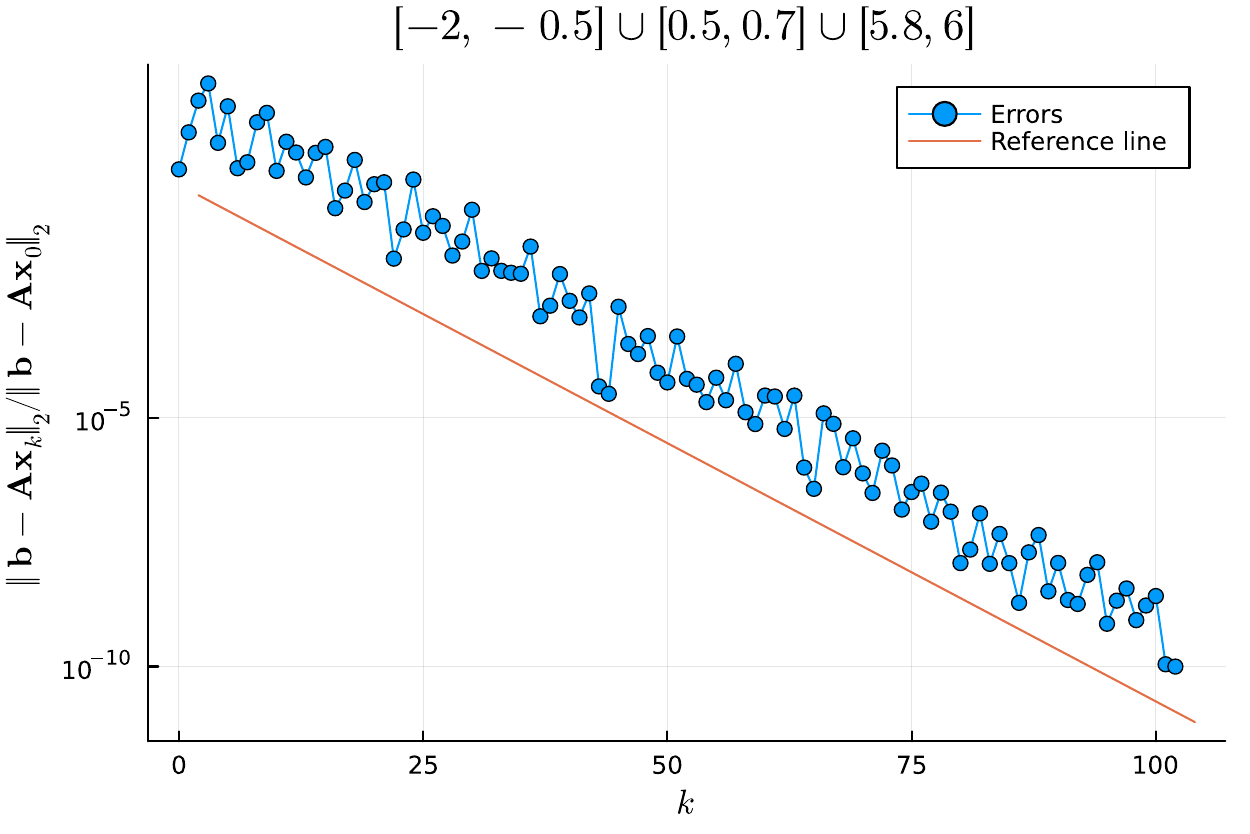}
	\end{subfigure}
	\caption{Relative residuals at each iteration for the Akhiezer iteration applied to $\bA\bx=\bb$ where $\bA\in\real^{200\times200}$ has eigenvalues near $[-2,-0.5]\cup[0.5,0.7]\cup[5.8,6]$ and $\bb\in\real^{200}$ is a random Gaussian vector with Akhiezer polynomials on $[-2,-0.5]\cup[0.5,6]$ (left) and and orthogonal polynomials with the weight \eqref{special_weight} on $[-2,-0.5]\cup[0.5,0.7]\cup[5.8,6]$ (right). The convergence rate $\ex^{\nu(0;\bA)}$ plotted in the reference line is approximately 0.888 (left) and 0.787 (right). The iteration converges to a relative residual of $10^{-10}$ in 177 iterations (left) and 102 iterations (right).}
	\label{2vs3}
\end{figure}

As a final example, we consider the boundary-value problem (BVP) on $[0,1]$:
\begin{equation*}
\begin{aligned}
-\frac{\df^2}{\df x^2}u(x)-30\ex^xu(x)=x,\\
u(0)=u(1)=0.
\end{aligned}
\end{equation*}
Applying a centered finite difference approximation on an equally spaced grid of 100 points yields a symmetric indefinite linear system $\bA\bx=\bb$ with eigenvalues on the positive real axis that are unbounded as the grid spacing is refined. To control the eigenvalues, we apply a preconditioner $\bL^{-1}$ where $\bL$ corresponds to a centered finite difference approximation on the same grid applied to the BVP
\begin{equation*}
	\begin{aligned}
		-\frac{\df^2}{\df x^2}u(x)=0,\\
		u(0)=u(1)=0.
	\end{aligned}
\end{equation*}
We then expect $\bL^{-1}\bA$ to have the majority of its eigenvalues bounded by 1 on the positive real axis with the remaining eigenvalues on the negative real axis. Thus, we apply Algorithm \ref{alg:adapt} to the system $\bL^{-1}\bA\bx=\bL^{-1}\bb$ with an initial guess of $\Sigma=[-2,-0.5]\cup[0.5, 1]$ and parameters $\gamma_o=5,\gamma_i=0.7$ and obtain the bands $\Sigma=[-4.16236,-0.24854]\cup[0.25104, 3.10107]$. The true eigenvalues of $\bL^{-1}\bA$ fall in $[-4.14928,-0.28169]\cup[0.43062,0.99921]$ with eigenvalues at each endpoint, so this algorithm does a good job of capturing the ``worst offender'' but far overshoots the right endpoint. However, applying the modification of Algorithm \ref{alg:adapt} to optimize the convergence rate described in Section \ref{sect:adapt} yields the bands $\Sigma=[-4.15388,-0.28391]\cup[0.44168, 1.01575]$; these are very close to the true bands, but note that the interval endpoint $-0.28391 < -0.28169$, and an eigenvalue lies outside the computed bands. This occurs due to difficulties in approximating the exact matrix polynomial growth rate ($r$ in Algorithm~\ref{alg:adapt}) but gives just a slight perturbation to the geometric rate of convergence. Furthermore, applying the variation with inner products finds the true bands to full precision.

In Figure \ref{schro}, we plot the relative residuals when applying the Akhiezer iteration to $\bL^{-1}\bA\bx=\bL^{-1}\bb$ with the Akhiezer polynomials on $\Sigma=[-4.14823,-0.245]\cup[0.245, 3.09077]$ and the Akhiezer polynomials on $\Sigma=[-4.15746,-0.30738]\cup[0.42308, 1.01751]$ for 250 iterations.
\begin{figure}
	\centering
	\begin{subfigure}{0.495\linewidth}
		\centering
		\includegraphics[width=\linewidth]{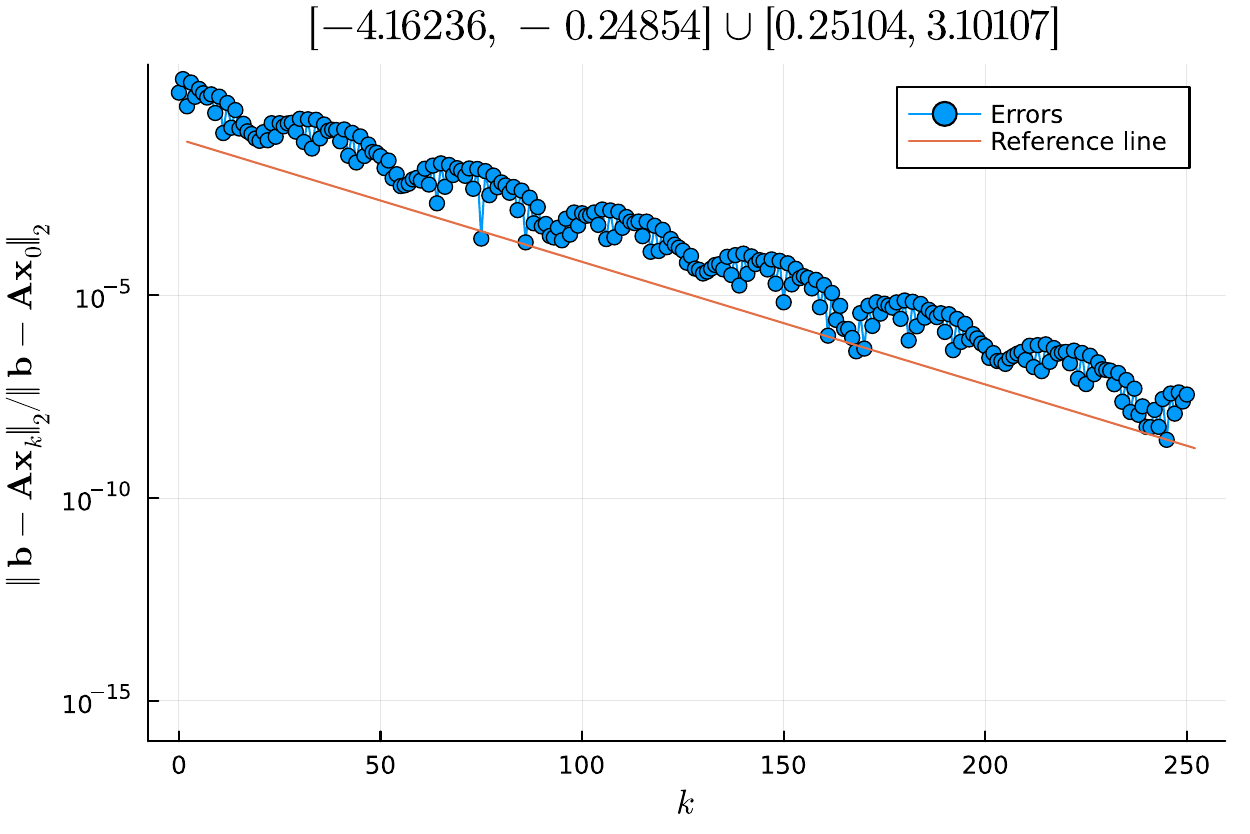}
	\end{subfigure}
	\begin{subfigure}{0.495\linewidth}
		\centering
		\includegraphics[width=\linewidth]{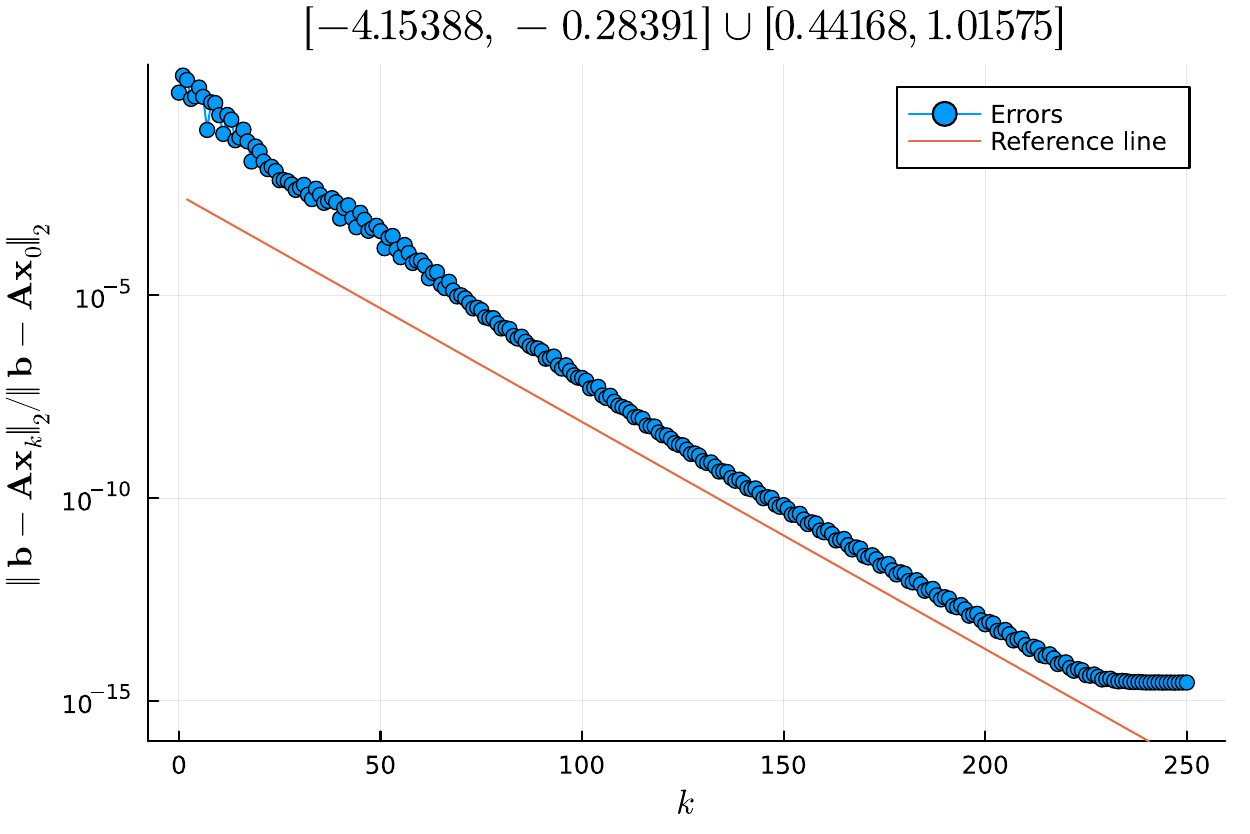}
	\end{subfigure}
	\caption{Relative residuals at each iteration for 250 iterations of the Akhiezer iteration applied to a preconditioned BVP for two different approaches for adaptive band selection (left: Algorithm~\ref{alg:adapt}, right:  Algorithm~\ref{alg:adapt}, moving one endpoint at a time).  The convergence rate $\ex^{\nu(0;\bA)}$ plotted in the reference line is approximately 0.933 (left) and 0.879 (right).}
	\label{schro}
\end{figure}

\section{Discussion and open questions}\label{sect:discuss}

First, in the current work, we have not addressed the question of adaptivity for matrices with eigenvalues off the real axis. In \cite{manteuffel_adaptive_1978}, this is handled for the classical Chebyshev iteration by estimating the convex hull of the spectrum via the power method and choosing the iteration parameters based on the known ``optimal ellipse'' containing this convex hall \cite{Manteuffel1977}. For the Akhiezer iteration, similar eigenvalue estimations could be employed, but one would instead need to look for at least two distinct regions of eigenvalues rather than the convex hull. Extending the notion of the optimal ellipse to a higher genus seems to be the primary difficulty. That is, given ellipses containing the spectrum, finding the intervals that optimize the convergence rate of the Akhiezer iteration requires a generalization of the results of \cite{Manteuffel1977} to more complex geometries based on the level curves of $\re \mathfrak g(z)$. How to best approach this is an important and interesting question that we leave for future work.  

On the other hand, the adaptivity techniques for symmetric matrices of Section~\ref{sect:adapt} can be used for eigenvalue approximation, even when decoupled from the Akhiezer iteration. When the eigenvalues are contained in two disjoint intervals, the variant based on inner products computes both interior and exterior eigenvalues to full precision. In particular, given any symmetric matrix, one could compute interior eigenvalues in the interval $[b,c]$ by choosing an initial guess of $[a,b]\cup[c,d]$. Then, the algorithm would compute two consecutive eigenvalues in $[b,c]$ or leave $b$ and/or $c$ unchanged if there are zero/one eigenvalues in this interval. This requires at most 4 Rayleigh quotient computations, so this provides an efficient method to compute interior eigenvalues to full precision.

\subsection{Other applications to orthogonal polynomials}

Through the framework established here and in \cite{part1}, an approach similar to that in \cite{Townsend2014} could be employed to compute the $N$-point Gaussian quadrature rule for a measure with density of the form \eqref{rhp_weight} in $\OO(N)$ complexity.  In particular, Akhiezer's formulae should enable this approach to yield a particularly efficient algorithm for the Akhiezer polynomials on two intervals. Unfortunately, a good application for this quadrature rule is elusive at this time. 

Finally, the computations in this work could potentially be improved upon by directly using the asymptotic formulae for the orthogonal polynomials corresponding to weight functions of the form \eqref{rhp_weight}. These asymptotic formulae could be obtained via an approach similar to that in \cite{Huybrechs2016}, and then, like the Akhiezer polynomials on two intervals, computed efficiently by simply evaluating the formulae \`a la \cite{Bogaert2014a,Hale2012}. Then, our Riemann--Hilbert-based numerical method would be required only for low degree polynomials before switching to evaluating the asymptotic formulae when the degree is sufficiently large. Such an approach will be explored further in future work.

\section*{Acknowledgements}

This work was supported in part by NSF DMS-1945652, DMS-2306438 (TT).  We would also like to thank Alex Townsend and Walter Van Assche for pointing us to valuable references. 

\appendix

\section{Computing orthogonal polynomials via a  Riemann--Hilbert problem}\label{ap:rhp}
Recurrence coefficients and Cauchy integrals of weight functions of the form \eqref{special_weight} can be computed by numerically solving a deformed version of the Fokas--Its--Kitaev (FIK) Riemann--Hilbert problem \cite{Fokas1992}. We first define the notation necessary for this problem. 

Let $\mathfrak g$ denote the exterior Green's function with pole at infinity for $\Sigma=\bigcup_{j=1}^{g+1}[a_j,b_j]$. More specifically, define 
\begin{equation*}
	\mathfrak{g}'(z)=\frac{Q_g(z)}{R(z)},\quad\text{where}~R(z)^2=\prod_{j=1}^{g+1}(z-a_j)(z-b_j).
\end{equation*}
Here, $Q_g$ is a monic polynomial of degree $g$
\begin{equation*}
	Q_g(z)=z^g+\sum_{k=0}^{g-1}h_kz^k,
\end{equation*}
whose coefficients $h_k$ are determined by the linear system 
\begin{equation*}
	\int_{b_j}^{a_{j+1}}\sum_{k=0}^{g-1}h_k\frac{z^k}{R(z)}\df z=-\int_{b_j}^{a_{j+1}}\frac{z^g}{R(z)}\df z,
\end{equation*}
for $j=1,\ldots,g$. Then, we define  $\mathfrak{g}$ by normalizing at the leftmost endpoint. If some function satisfies $\psi'=\mathfrak{g}'$, then we let 
\begin{equation*}
	\mathfrak{g}(z)=\psi(z)-\psi(a_1).
\end{equation*}
Define $\mathfrak{g}_1$ to be the coefficient in the $\OO(z^{-1})$ term in asymptotic expansion of $\mathfrak g$. That is,
\begin{equation}\label{gasymp}
\mathfrak{g}(z)=\log\mathfrak{c} z+\frac{\mathfrak{g}_1}{z}+\ldots,\quad z\to\infty.
\end{equation}
We also define
\begin{equation*}
	\Delta_j=\mathfrak{g}^+(z)-\mathfrak{g}^-(z)=2\sum_{k=1}^{j}\int_{a_k}^{b_k}(\mathfrak{g}')^+(z)\df z\in\im\real,
\end{equation*}
for $z\in(b_j,a_{j+1})$, $j=1,\ldots,g$, where this value is independent of $z$ within each of these intervals.

Define the function $\mathfrak h_n$ by
\begin{equation*}
	\begin{aligned}
		\mathfrak h_n(z)&=R(z)\left(\sum_{j=1}^{g+1}
		\frac{1}{2\pi \im }\int_{a_j}^{b_j}\frac{A_j(n)}{R_+(s)(s-z)}\df s+\sum_{\ell=1}^{g}\int_{b_\ell}^{a_{\ell+1}}\frac{\log(\ex^{n\Delta_\ell})}{R(s)(s-z)}\df s\right),
	\end{aligned}
\end{equation*}
where $A_1(n),\ldots,A_{g+1}(n)$ are determined by the linear system
\begin{equation*}
	\sum_{j=1}^{g+1}
	A_j(n)\int_{a_j}^{b_j}\frac{s^{k-1}}{R_+(s)}\df s=-\sum_{\ell=1}^{g}\log(\ex^{n\Delta_\ell})\int_{b_\ell}^{a_{\ell+1}}\frac{s^{k-1}}{R(s)}\df s,
\end{equation*}
for $k=1,\ldots,g+1$. Define $\mathfrak h_n^{(1)}$ by
\begin{equation*}
	\mathfrak h_n(z)=\frac{\mathfrak h^{(1)}_{n}}{z}+\OO(z^{-2}),\quad z\to\infty.
\end{equation*}
Finally, for a general weight function of the form \eqref{rhp_weight}, define 
\begin{equation*}
	w_j(x)=h_j(x)\left(\sqrt{x-a_j}\right)^{\alpha_j}\left(\sqrt{b_j-x}\right)^{\beta_j}.
\end{equation*}
Weight functions of the form \eqref{special_weight} satisfy $w_j=w_0|_{(a_j,b_j)}$ for all $j=1,\ldots,g+1$, for an analytic function $w_0: \mathbb C \setminus \overline{(\mathbb R \setminus \Sigma)} \to \mathbb C $.

The deformed FIK Riemann--Hilbert problem for the unknown $\bT_n:\compl\setminus\Sigma\to\compl^{2\times2}$ is 
\begin{equation}\label{eq:RHPsmall}
	\begin{aligned}
		&\bT_n~\text{is analytic in}~\compl\setminus\Sigma,\\
		&\bT_n^+(z)=\bT_n^-(z)\begin{pmatrix}
			0 &w_0(z)\ex^{-A_j(n)}\\-\frac{1}{w_0(z)}\ex^{A_j(n)} &0
		\end{pmatrix},\quad z\in(a_j,b_j),\\
		&\bT_n(z)=\bI+\OO(z^{-1}),\quad z\to\infty,\\
		&\bT_n(z)=\OO\begin{pmatrix}
			1+|z-a_j|^{-\alpha_j/2} &1+|z-a_j|^{\alpha_j/2}\\
			1+|z-a_j|^{-\alpha_j/2} &1+|z-a_j|^{\alpha_j/2}
		\end{pmatrix},\quad z\to a_j,\\
		&\bT_n(z)=\OO\begin{pmatrix}
			1+|z-b_j|^{-\beta_j/2} &1+|z-b_j|^{\beta_j/2}\\
			1+|z-b_j|^{-\beta_j/2} &1+|z-b_j|^{\beta_j/2}
		\end{pmatrix},\quad z\to b_j.
	\end{aligned}
\end{equation}
This Riemann--Hilbert problem is solved numerically via a Chebyshev collocation method, similar to that which was used in \cite{Bilman2022}. The solution is represented in shifted Chebyshev polynomial bases on each interval with singularity structure chosen to match the endpoint asymptotics. The jump conditions are enforced at each collocation point by computing the  boundary values of the Cauchy integrals of the basis polynomials, and the expansion coefficients are approximated by the solution to a block linear system.  A general reference for this type of approach is \cite{TrogdonSOBook}. 

Given a solution to \eqref{eq:RHPsmall}, the recurrence coefficients are recovered by
\begin{equation}\label{realrecurr}
	\begin{aligned}
		a_n&=\left(\bT_{n}^{(1)}\right)_{11}-\left(\bT_{n+1}^{(1)}\right)_{11}-\mathfrak h_n^{(1)}+\mathfrak h_{n+1}^{(1)}-\mathfrak g_1,\\
		b_n&=\sqrt{\left(\bT_{n+1}^{(1)}\right)_{12}\left(\bT_{n+1}^{(1)}\right)_{21}}.
	\end{aligned}
\end{equation}
Methods to numerically compute the first order correction terms and the constants $A_j(n)$ can be found in \cite[Section 4]{part1}. Furthermore, if $p_0,p_1,\ldots$ denote the orthonormal polynomials with respect to the normalization of \eqref{special_weight}, the Cauchy integrals can be computed as \cite[Equation 31]{part1}
\begin{equation}\label{cauchyints}
	\mathcal{C}_\Sigma\left[p_nw\right](z)=\left(\prod_{j=0}^{n-1}\frac{1}{b_j\mathfrak{c}}\right)\left(\bT_n(z)\right)_{12}\ex^{\mathfrak{h}_n(z)-n\mathfrak{g}(z)},
\end{equation}
where $\mathfrak c$ is as defined in \eqref{gasymp}. Methods to numerically compute $\mathfrak{c}$, $\mathfrak{g}$, and $\mathfrak{h}_n$ are found in \cite[Section 4]{part1}.

To see why it suffices to solve such a simple Riemann--Hilbert problem, consider the final deformed FIK Riemann--Hilbert problem from \cite{part1} for the unknown $\Tilde\bS_n$:
\begin{equation}\label{RHPfinal}
	\begin{aligned}
		&\Tilde \bS_n~\text{is analytic in}~\compl\setminus\left(\bigcup_{j=1}^{g+1}[a_j,b_j]\cup\bigcup_{j=1}^{g+1}C_j\right),\\
		&\Tilde \bS_n^+(z)=\Tilde \bS_n^-(z)\begin{pmatrix}
			1 &0\\-\frac{1}{w_j(z)}\ex^{2\mathfrak h_n(z)-2n\mathfrak{g}(z)} &1
		\end{pmatrix},\quad z\in C_j\cap\compl^+,\\
		&\Tilde \bS_n^+(z)=\Tilde \bS_n^-(z)\begin{pmatrix}
			1 &0\\\frac{1}{w_j(z)}\ex^{2\mathfrak h_n(z)-2n\mathfrak{g}(z)} &1
		\end{pmatrix},\quad z\in C_j\cap\compl^-,\\
		&\Tilde \bS_n^+(z)=\Tilde \bS_n^-(z)\begin{pmatrix}
			0 &w_j(z)\ex^{-A_j(n)}\\-\frac{1}{w_j(z)}\ex^{A_j(n)} &0
		\end{pmatrix},\quad z\in(a_j,b_j),\\
		&\Tilde \bS_n(z)=\bI+\OO(z^{-1}),\quad z\to\infty,\\
		&\Tilde\bS_n(z)=\OO\begin{pmatrix}
			1+|z-a_j|^{-\alpha_j/2} &1+|z-a_j|^{\alpha_j/2}\\
			1+|z-a_j|^{-\alpha_j/2} &1+|z-a_j|^{\alpha_j/2}
		\end{pmatrix},\quad z\to a_j,\\
		&\Tilde\bS_n(z)=\OO\begin{pmatrix}
			1+|z-b_j|^{-\beta_j/2} &1+|z-b_j|^{\beta_j/2}\\
			1+|z-b_j|^{-\beta_j/2} &1+|z-b_j|^{\beta_j/2}
		\end{pmatrix},\quad z\to b_j,
	\end{aligned}
\end{equation}
where each $C_j$ is a counterclockwise-oriented curve lying in $\Omega_j$ that encircles, but does not intersect $[a_j,b_j]$. Because weight functions of the form \eqref{special_weight} satisfy $w_j= w_0|_{(a_j,b_j)}$ for all $j$ and $w_0$ only has singularities at the endpoints of intervals, one can instead define $C$ to be a counterclockwise-oriented circle with $\Sigma$ in its interior. Then, the jump conditions on $C_1,\ldots C_{g+1}$ can be replaced with a single, identical jump condition on $C$.

To see why the jump conditions on $C$ can then effectively be removed, consider a function $\bPhi$ that solves the Riemann--Hilbert problem:
\begin{equation}\label{eq:modelRHP}
	\begin{aligned}
		&\bPhi^+(z)=\bPhi^-(z)\bJ(z),\quad z\in C,\\
		&\bPhi(z)=\bI+\bPhi_1 z^{-1}+\OO(z^{-2}),\quad z\to\infty,
	\end{aligned}
\end{equation}
where $\bJ(z)$ is analytic exterior to $C$ and satisfies
\begin{equation*}
	\bJ(z)=\bI+\begin{pmatrix}
		0&0\\j_1 &0
	\end{pmatrix}z^{-1}+\begin{pmatrix}
	0&0\\\OO(z^{-2}) &0
	\end{pmatrix},\quad z\to\infty.
\end{equation*}
We allow $\bPhi$ to potentially satisfy additional jump conditions in the interior of $C$. We denote the interior of $C$ by $D$. Define
\begin{equation*}
	\bPsi(z)=\begin{cases}
		\bPhi(z) &z\in D,\\
		\bPhi(z)\bJ(z) &z\in\compl\setminus\overline D.
	\end{cases}
\end{equation*}
Then, $\bPsi(z)$ satisfies the same jump conditions as $\bPhi$ in $D$, but the jump condition on $C$ is now the identity. It has asymptotic behavior 
\[
\bPsi(z)=\bI+\left(\bPhi_1+\begin{pmatrix}
	0&0\\j_1 &0
\end{pmatrix}\right)z^{-1}+\OO(z^{-2}),\quad z\to\infty.
\]

We apply this to $\tilde \bS_n$ where $\bJ$ is the jump on $C$, after the replacement of the jumps on $C_1,\ldots,C_{g+1}$ with a single jump on $C$.  Provided the asymptotics for $\bJ$ hold as hypothesized, we can essentially ignore the jump on $C$, and we find the Riemann--Hilbert problem for $\bT_n$.

Then, we note that since the $\OO(z^{-1})$ term is all that is required to compute the recurrence coefficients (see \eqref{realrecurr}) and $a_n$ requires only the $(1,1)$ entry of this term, $a_n$ is directly recovered from the simplified problem for $\bT_n$ provided that
\begin{equation*}
\frac{1}{w_0(z)}\ex^{2\mathfrak h_n(z)-2n\mathfrak{g}(z)}=\OO(z^{-1}),\quad z\to\infty.
\end{equation*}
The recovery of $b_n$ requires the additional assumption that $j_1=0$, i.e., $\bJ(z)=\bI+\OO(z^{-2})$ as $z\to\infty$.
For weight functions of the form \eqref{special_weight}, $w_0(z)=\OO(z)$ as $z\to\infty$, so 
\begin{equation*}
\frac{1}{w_0(z)}\ex^{2\mathfrak h_n(z)-2n\mathfrak{g}(z)}=\OO(z^{-1-2n}),\quad z\to\infty.
\end{equation*}
Thus, $j_1=0$ for all $n>0$. Since $b_n$ is recovered from $\bT_{n+1}$, this is sufficient to compute all the recurrence coefficients for this weight. Furthermore, since multiplication by $\bJ(z)$ does not affect the $(1,2)$ entry, it is sufficient to compute the Cauchy integrals according to \eqref{cauchyints}.

This analysis can be extended to weight functions $w$ that consist of a meromorphic function multiplied by the weight \eqref{special_weight} that are constrained to only have zeroes and poles at the endpoints of intervals. Then, the jump conditions on each $C_j$ can still be replaced with a jump condition on $C$, but whether this jump condition can be thrown away depends on the asymptotic behavior of $w$. In particular, the Akhiezer weight \eqref{eq:gen_akh_weight} satisfies $w_0(z)=\OO(z^{-1})$ as $z\to\infty$, so the full Riemann--Hilbert problem \eqref{RHPfinal} is required only to compute the $0$th pair of recurrence coefficients, and all others can be computed by solving the simplified problem \eqref{eq:RHPsmall}.

\section{Akhiezer polynomial recurrence coefficients}\label{ap:recurr}
Assume that the $n$th degree orthonormal polynomial can be represented as 
\begin{equation*}
	p_n(x)=k_nx^n+\ell_nx^{n-1}+\ldots.
\end{equation*}
Then, the recurrence coefficients can be recovered by \cite{deift_2000}
\begin{equation}\label{coeff_formula}
	a_n=\frac{\ell_n}{k_n}-\frac{\ell_{n+1}}{k_{n+1}},\quad b_n=\frac{k_n}{k_{n+1}},
\end{equation}
with the convention that $\ell_0=0,k_0=1$. To obtain such a representation, we must expand \eqref{u_formula} around $u=-\rho$, the point corresponding to $x=\infty$ on one sheet of the Riemann surface. Noting that $H$ is an odd function, we expand
\begin{align*}
	\frac{H(u-\rho)}{H(u+\rho)}&=
	\left(-H(2\rho)+H'(2\rho)(u+\rho)+\OO((u+\rho)^2)\right)\frac{1}{H'(0)(u+\rho)}\left(1+\OO((u+\rho)^2)\right)^{-1}\\&=
	-\frac{H(2\rho)}{H'(0)}\frac{1}{u+\rho}+\frac{H'(2\rho)}{H'(0)}+\OO(u+\rho).
\end{align*}
Similarly, $\Theta$ is an even function, and we expand
\begin{align*}
	\frac{\Theta(u+2n\rho)}{\Theta(u)}&=
	\left(\Theta\left((2n-1)\rho\right)+\Theta'\left((2n-1)\rho\right)(u+\rho)+\ldots\right)\frac{1}{\Theta(\rho)}\left(1-\frac{\Theta'(\rho)}{\Theta(\rho)}(u+\rho)+\ldots\right)^{-1}\\&=
	\frac{\Theta\left((2n-1)\rho\right)}{\Theta(\rho)}+\left(\frac{\Theta'\left((2n-1)\rho\right)}{\Theta(\rho)}+\frac{\Theta\left((2n-1)\rho\right)\Theta'(\rho)}{\Theta^2(\rho)}\right)(u+\rho)+\OO\left((u+\rho)^2\right).
\end{align*}
The other product summed in \eqref{poly} is $\OO(u+\rho)$ and will not contribute to the recurrence coefficients. To convert these expansions from $u+\rho$ to $x$, we utilize the relation 
\begin{equation*}
	x-\alpha=\frac{1-\alpha^2}{2(\sn^2u-\sn^2\rho)},
\end{equation*}
which we rewrite as 
\begin{equation}\label{eq:relation}
	(\sn u+\sn\rho)(\sn u-\sn\rho)=\frac{1-\alpha^2}{2(x-\alpha)}.
\end{equation}
Noting that $\sn$ is an odd function, we expand
\begin{align*}
	(\sn u+\sn\rho)(\sn u-\sn\rho)&=\left(\sn'\rho(u+\rho)-\frac{\sn''\rho}{2}(u+\rho)^2\ldots\right)\left(-2\sn\rho+\sn'\rho(u+\rho)+\ldots\right)\\&=
	-2\sn\rho\sn'\rho(u+\rho)\left(1-\left(\frac{\sn'\rho}{2\sn\rho}+\frac{\sn''\rho}{2\sn'\rho}\right)(u+\rho)+\OO\left((u+\rho)^2\right)\right).
\end{align*}
Substituting this into \eqref{eq:relation}, solving for $u+\rho$, and iterating,
\begin{align*}
	u+\rho&=-\frac{1-\alpha^2}{4\sn\rho\sn'\rho}\left(1-\left(\frac{\sn'\rho}{2\sn\rho}+\frac{\sn''\rho}{2\sn'\rho}\right)(u+\rho)+\OO\left((u+\rho)^2\right)\right)^{-1}\frac{1}{x-\alpha}\\&=
	-\frac{1-\alpha^2}{4\sn\rho\sn'\rho}\left(1-\frac{(1-\alpha^2)}{4\sn\rho\sn'\rho}\left(\frac{\sn'\rho}{2\sn\rho}+\frac{\sn''\rho}{2\sn'\rho}\right)\frac{1}{x-\alpha}+\OO\left(\left(\frac{1}{x-\alpha}\right)^2\right)\right)\frac{1}{x-\alpha}.
\end{align*}
Inverting this,
\begin{align*}
	&\frac{1}{u+\rho}=
	-\frac{4\sn\rho\sn'\rho}{1-\alpha^2}(x-\alpha)-\left(\frac{\sn'\rho}{2\sn\rho}+\frac{\sn''\rho}{2\sn'\rho}\right)+\OO\left(\frac{1}{x-\alpha}\right).
\end{align*}
Substituting these into the series in $u+\rho$,
\begin{align*}
	&\frac{H(u-\rho)}{H(u+\rho)}=\frac{H(2\rho)}{H'(0)}\frac{4\sn\rho\sn'\rho}{1-\alpha^2}(x-\alpha)+\frac{H(2\rho)}{H'(0)}\left(\frac{\sn'\rho}{2\sn\rho}+\frac{\sn''\rho}{2\sn'\rho}\right)+\frac{H'(2\rho)}{H'(0)}+\OO\left(\frac{1}{x-\alpha}\right),\\
	&\frac{\Theta(u+2n\rho)}{\Theta(u)}=\frac{\Theta\left((2n-1)\rho\right)}{\Theta(\rho)}-\frac{1-\alpha^2}{4\sn\rho\sn'\rho}\left(\frac{\Theta'\left((2n-1)\rho\right)}{\Theta(\rho)}+\frac{\Theta\left((2n-1)\rho\right)\Theta'(\rho)}{\Theta^2(\rho)}\right)\frac{1}{x-\alpha}\\&+\OO\left(\left(\frac{1}{x-\alpha}\right)^2\right).
\end{align*}
To obtain the recurrence coefficients from these expansions, denote 
\begin{align*}
	c&=\frac{H(2\rho)}{H'(0)}\frac{4\sn\rho\sn'\rho}{1-\alpha^2},\quad &d&=\frac{H(2\rho)}{H'(0)}\left(\frac{\sn'\rho}{2\sn\rho}+\frac{\sn''\rho}{2\sn'\rho}\right)+\frac{H'(2\rho)}{H'(0)},\\
	\gamma_n&=\frac{\Theta\left((2n-1)\rho\right)}{\Theta(\rho)},&\delta_n&=-\frac{1-\alpha^2}{4\sn\rho\sn'\rho}\left(\frac{\Theta'\left((2n-1)\rho\right)}{\Theta(\rho)}+\frac{\Theta\left((2n-1)\rho\right)\Theta'(\rho)}{\Theta^2(\rho)}\right).
\end{align*}
Then,
\begin{align*}
	p_n(x)&=C_n\left(c(x-\alpha)+d+\ldots\right)^n\left(\gamma_n+\frac{\delta_n}{x-\alpha}+\ldots\right)+\OO\left(\frac{1}{x-\alpha}\right)\\&=
	C_n\gamma_nc^nx^n+C_n(n\gamma_nc^{n-1}d+\delta_nc^n-n\alpha\gamma_nc^n)x^{n-1}+\ldots,
\end{align*}
so
\begin{equation*}
	k_n=C_n\gamma_nc^n,\quad \ell_n=C_n(n\gamma_nc^{n-1}d+\delta_nc^n-n\alpha\gamma_nc^n).
\end{equation*}
Applying \eqref{coeff_formula}, we get that
\begin{align*}
	&a_n=\frac{\ell_n}{k_n}-\frac{\ell_{n+1}}{k_{n+1}}=\left(\frac{nd}{c}+\frac{\delta_n}{\gamma_n}-n\alpha\right)-\left(\frac{(n+1)d}{c}+\frac{\delta_{n+1}}{\gamma_{n+1}}-(n+1)\alpha\right)=-\frac{d}{c}+\frac{\delta_n}{\gamma_n}-\frac{\delta_{n+1}}{\gamma_{n+1}}+\alpha,\\
	&b_n=\frac{k_n}{k_{n+1}}=\frac{C_n}{C_{n+1}}\frac{1}{c},
\end{align*}
for $n\geq1$ and 
\begin{align*}
	&a_0=-\frac{\ell_1}{k_1}=-\frac{d}{c}-\frac{\delta_1}{\gamma_1}+\alpha,\\
	&b_0=\frac{2}{C_1}\frac{1}{c}.
\end{align*}
Finally, we substitute in $c,d,\gamma_n,\delta_n$ and $C_n$ and get
\begin{align*}
	a_0=&(1-\alpha^2)\left(\frac{1}{8\sn^2\rho}+\frac{\sn''\rho}{8\sn\rho\left(\sn'\rho\right)^2}+\frac{H'(2\rho)}{H(2\rho)}\frac{1}{4\sn\rho\sn'\rho}\right)+\frac{1-\alpha^2}{2\sn\rho\sn'\rho}\frac{\Theta'\left(\rho\right)}{\Theta\left(\rho\right)}+\alpha,\\
	a_n=&(1-\alpha^2)\left(\frac{1}{8\sn^2\rho}+\frac{\sn''\rho}{8\sn\rho\left(\sn'\rho\right)^2}\right)\\&+\frac{1-\alpha^2}{4\sn\rho\sn'\rho}\left(\frac{H'(2\rho)}{H(2\rho)}+\frac{\Theta'\left((2n+1)\rho\right)}{\Theta\left((2n+1)\rho\right)}-\frac{\Theta'\left((2n-1)\rho\right)}{\Theta\left((2n-1)\rho\right)}\right)+\alpha,\quad n\geq1,\\
	b_0=&\frac{1-\alpha^2}{4\sn\rho\sn'\rho}\frac{H'(0)}{H(2\rho)}\sqrt{\frac{2\Theta(3\rho)}{\Theta(\rho)}},\\
	b_n=&\frac{1-\alpha^2}{4\sn\rho\sn'\rho}\frac{H'(0)}{H(2\rho)}\frac{\sqrt{\Theta((2n+3)\rho)\Theta((2n-1)\rho)}}{\Theta((2n+1)\rho)},\quad n\geq1.
\end{align*}

\section{The classical Chebyshev iteration and our modification}\label{ap:cheb_iter}
To compare our presentation of the Chebyshev iteration with that of \cite{gutknecht_chebyshev_2002}, suppose that the interval $[\alpha-c,\alpha+c]$ does not contain the origin, and for simplicity suppose $\alpha - c > 0$. Define the shifted and scaled Chebyshev-$T$ polynomials by
\[
\Tilde T_k(x)=T_k\left(\frac{x-\alpha}{c}\right).
\]
Based on this, define
\begin{equation*}
r_k(x)=\frac{\Tilde  T_k(x)}{\Tilde T_k(0)},\quad q_k(x)=\frac{1-r_k(x)}{x}.
\end{equation*}
We have the recurrence
\begin{equation}\label{eq:shift_cheb_recurr}
\begin{aligned}
\frac{x-\alpha}{c}\Tilde T_0(x)&=\Tilde T_1(x),\\
2\frac{x-\alpha}{c}\Tilde T_k(x)&=\Tilde T_{k-1}(x)+\Tilde T_{k+1}(x),\quad k\geq1,
\end{aligned}
\end{equation}
so $r_k$ must satisfy 
\begin{equation*}
	\begin{aligned}
		\frac{x-\alpha}{c}r_0(x)&=r_1(x)\frac{\Tilde T_1(0)}{\Tilde T_0(0)},\\
		2\frac{x-\alpha}{c}r_k(x)&=r_{k-1}(x)\frac{\Tilde T_{k-1}(0)}{\Tilde T_{k}(0)}+r_{k+1}(x)\frac{\Tilde T_{k+1}(0)}{\Tilde T_{k}(0)},\quad k\geq1.
	\end{aligned}
\end{equation*}
Using $r_k(x)=1-xq_k(x)$ and applying \eqref{eq:shift_cheb_recurr},
\begin{equation*}
	\begin{aligned}
		\frac{x-\alpha}{c}(1-xq_0(x))&=\frac{\alpha}{c}xq_1(x)-\frac{\alpha}{c},\\
		2\frac{x-\alpha}{c}(1-xq_k(x))&=-xq_{k-1}(x)\frac{\Tilde T_{k-1}(0)}{\Tilde T_{k}(0)}-xq_{k+1}(x)\frac{\Tilde T_{k+1}(0)}{\Tilde T_{k}(0)}-\frac{2\alpha}{c},\quad k\geq1.
	\end{aligned}
\end{equation*}
This can be rearranged as
\begin{equation*}
	\begin{aligned}
		q_1(x)&=\frac{1}{\alpha}(1-xq_0(x)+\alpha q_0(x)),\\
		q_{k+1}(x)&=-\frac{\Tilde T_{k}(0)}{\Tilde T_{k+1}(0)}\left(\frac{2}{c}\left(1-xq_k(x)+\alpha q_k(x)\right)+\frac{\Tilde T_{k-1}(0)}{\Tilde T_{k}(0)}q_{k-1}(x)\right),\quad k\geq1.
	\end{aligned}
\end{equation*}
Thus, 
\begin{equation*}
q_{k+1}(x)=-\frac{1}{\gamma_k}(r_k(x)+\alpha q_k(x)+\beta_{k-1} q_{k-1}(x)),\quad k\geq0,
\end{equation*}
where 
\begin{equation*}
\begin{aligned}
&\gamma_0=-\frac{1}{\alpha},\quad &\gamma_k&=\frac{c}{2}\frac{\Tilde T_{k+1}(0)}{\Tilde T_{k}(0)}=\frac{c}{2}\frac{T_{k+1}(-\alpha/c)}{ T_{k}(-\alpha/c)},\quad k\geq1,\\
&\beta_{-1}=0,\quad &\beta_k&=\frac{c}{2}\frac{\Tilde T_{k}(0)}{\Tilde T_{k+1}(0)}=\frac{c}{2}\frac{ T_{k}(-\alpha/c)}{T_{k+1}(-\alpha/c)},\quad k\geq0.
\end{aligned}
\end{equation*}
Furthermore, the recurrence for $r_k$ can be rearranged as
\begin{equation*}
r_{k+1}(x)=\frac{1}{\gamma_k}(xr_k(x)-\alpha r_k(x)-\beta_{k-1} r_{k-1}(x)),\quad k\geq0.
\end{equation*}

\subsection{Error bounds for the residuals}
The Chebyshev iteration as given in \cite[Algorithm 1]{gutknecht_chebyshev_2002} is recovered by replacing $q_k(x)$ with $\bx_k$ and $r_k(x)$ with $\br_k$ in these recurrences. That is, the iterates are defined by $\bx_k=q_k(\bA)\bb$, and the residuals are given by $\br_k=\bb-\bA\bx_k=r_k(\bA)\bb$ where $\br_k$ is presumably small because $r_k$ is small on $[\alpha-c,\alpha+c]$ due to the minimax property of the Chebyshev polynomials. More specifically, for $x \in [\alpha - c, \alpha + c]$,
\[
|r_k(x)|=\left|\frac{\Tilde T_k(x)}{\Tilde T_k(0)}\right|\leq\frac{2 |\rho|^k}{1 +|\rho|^{2k}}, \quad \rho = J_+^{-1}\left( - \frac{\alpha}{c} \right),
\]
where $J^{-1}_+$ is a right inverse of the Joukowsky map, $z \mapsto (z + z^{-1})/2$, given by
\[
J^{-1}_+(z)=z-\sqrt{z-1}\sqrt{z+1},
\] 
which maps $\compl \setminus [-1,1]$ to the interior of the unit disk. This results in error bounds for $\br_k$.  An interesting aspect of this bound on $r_k$ is that it remains bounded as $|\rho| \to 1$.  

In contrast, our modification of the Chebyshev iteration implemented in Algorithm~\ref{alg:cheb_iter} relies on building an approximation to the matrix inverse out of Chebyshev polynomials. Define the normalized shifted and scaled Chebyshev-$T$ polynomials by
\begin{equation*}
\hat T_0(x)=\Tilde{T}_0(x),\quad \hat T_k(x)=\sqrt{2}\Tilde{T}_k(x),\quad k\geq1.
\end{equation*}
From \cite[Section 4.1]{part1}, we have the following recurrence for $S_k=\int_{\alpha-c}^{\alpha+c}\frac{\Tilde T_j(z)}{z}\frac{\df z}{\pi\sqrt{z-\alpha+c}\sqrt{\alpha+c-z}}$:
\begin{equation*}
S_0=\frac{1}{\sqrt{\alpha-c}\sqrt{\alpha+c}},\quad S_{k}=S_{k-1}J^{-1}_+\left(\frac{-\alpha}{c}\right),\quad k\geq 1,
\end{equation*}
Then, \eqref{eq:cheby_iter} can be rewritten as
\begin{equation}\label{eq:stielt}
\bx_{k+1}=S_0\Tilde T_0(\bA)\bb+2\sum_{j=1}^{k}S_j\Tilde T_j(\bA)\bb.
\end{equation}
Coupling this with the recurrence \eqref{eq:shift_cheb_recurr} yields the implementation of our modification of the Chebyshev iteration in Algorithm~\ref{alg:cheb_iter}.

To analyze the convergence, we utilize the series
\begin{equation*}
\frac{1}{x}=\frac{2}{c}\frac{1}{\sqrt{\frac{\alpha}{c}-1}\sqrt{\frac{\alpha}{c}+1}}\sump_{j=0}^{\infty}\rho^j\Tilde T_j(x),\quad \rho=J^{-1}_+\left(\frac{-\alpha}{c}\right),
\end{equation*}
where the prime denotes that the $0$th term is halved. This follows from \eqref{eq:stielt} or \cite[Equation 5.14]{Mason2002}. Then, to estimate the error for $k$th iteration we use, for $x \in [\alpha -c, \alpha + c]$,
\begin{equation*}
\begin{aligned}
G_{k}(x)=\frac{1}{x}-\frac{2}{c}\frac{1}{\sqrt{\frac{\alpha}{c}-1}\sqrt{\frac{\alpha}{c}+1}}\sump_{j=0}^{k-1}\rho^j\Tilde T_j(x)=\frac{2}{c}\frac{1}{\sqrt{\frac{\alpha}{c}-1}\sqrt{\frac{\alpha}{c}+1}}\sum_{j=k}^{\infty}\rho^j\Tilde T_j(x).
\end{aligned}
\end{equation*}

To instead investigate the residual $\bb-\bA\bx_{k}$ of our modification, we use the three-term recurrence for $\tilde T_j$ to find
\begin{equation*}
\begin{aligned}
xG_{k}(x)=&\frac{2}{c}\frac{x}{\sqrt{\frac{\alpha}{c}-1}\sqrt{\frac{\alpha}{c}+1}}\sum_{j=k}^{\infty}\rho^j\Tilde T_j(x)
=\frac{\rho^{k-1}(\rho\Tilde T_{k-1}(x)-\Tilde T_{k}(x))}{\sqrt{\frac{\alpha}{c}-1}\sqrt{\frac{\alpha}{c}+1}} \\
&= -\frac{\rho^{k-1}(\Tilde T_{k-1}(x)+\Tilde T_{k}(x))}{\sqrt{\frac{\alpha}{c}-1}\sqrt{\frac{\alpha}{c}+1}} + \frac{\rho^{k-1}(\rho + 1)\Tilde T_{k-1}(x)}{\sqrt{\frac{\alpha}{c}-1}\sqrt{\frac{\alpha}{c}+1}}.
\end{aligned}
\end{equation*}
From the identity $T_{k}(x)+T_{k-1}(x)=(x+1)V_{k-1}(x)$ where $V_{k}$ is the $k$th Chebyshev polynomial of the third kind \cite{Mason2002} and the formula for $\rho$, it follows that
\begin{equation*}
\begin{aligned}
xG_{k}(x) &= -\frac{\rho^{k-1}\left( \frac{x - \alpha}{c} + 1 \right) \tilde V_{k-1}(x)}{\sqrt{\frac{\alpha}{c}-1}\sqrt{\frac{\alpha}{c}+1}} + \frac{\rho^{k-1}(-\frac{\alpha}{c} + 1)\Tilde T_{k-1}(x)}{\sqrt{\frac{\alpha}{c}-1}\sqrt{\frac{\alpha}{c}+1}} + \rho^{k-1}\Tilde T_{k-1}(x)\\
&=  -\frac{\rho^{k-1} x \tilde V_{k-1}(x)}{\sqrt{\alpha - c}\sqrt{\alpha + c}} + \frac{\rho^{k-1}\sqrt{\frac{\alpha}{c}-1}(\tilde V_{k-1}(x) - \Tilde T_{k-1}(x))}{\sqrt{\frac{\alpha}{c}+1}} + \rho^{k-1}\Tilde T_{k-1}(x),\\
\tilde V_k(x) &= V_k\left( \frac{x - \alpha}{c} \right).
\end{aligned}
\end{equation*}
We see that as $\alpha/c \to 1$, $\rho \to -1$, we do not enjoy the same boundedness as $r_k(x)$ above, but for $x \ll 1$, in this limit, $x G_k(x)$ remains under control.  Thus, if one cares about minimizing the residual, for an ill-conditioned linear system, the classical Chebyshev iteration is likely to perform better.

\subsection{Error bounds for the true error}
To get a sense of the error
\begin{align*}
    \be_k = \bx_k - \bA^{-1} \bb,
\end{align*}
we compute 
\begin{align*}
|G_k(x)| &\leq \frac{2|\rho|^{k}}{\left|\sqrt{\alpha-c}\sqrt{\alpha+c}\right|}\frac{1}{1-|\rho|} = \frac{2|\rho|^{k}}{\left|\sqrt{\alpha-c}\sqrt{\alpha+c}\right|}\frac{1}{1 - \frac{\alpha}{c} - \sqrt{\frac{\alpha}{c}-1}\sqrt{\frac{\alpha}{c}+1}}\\
& = \frac{2 c |\rho|^{k}}{\left|\alpha-c\right| \left| \sqrt{\alpha+c}\right|}\frac{1}{|\sqrt{\alpha + c}| - |\sqrt{c - \alpha}|}.
\end{align*}
The coefficient of $|\alpha - c|^{-1}|\rho|^k$ tends to one as $\alpha/c \to 1$. 

Then the corresponding quantity for the classical Chebyshev iteration is
\begin{align*}
\left| \frac{1}{x} - q_k(x)\right| = \left| \frac{\tilde T_k(x)}{x\tilde T_k(0)} \right| \leq \frac{2 |\rho|^k}{|x|(1 +|\rho|^{2k})},
\end{align*}
which is maximized when $x = \alpha - c$ giving
\begin{align*}
    \left| \frac{1}{x} - q_k(x)\right| \leq  \frac{2 |\rho|^k}{|\alpha - c|(1 +|\rho|^{2k})}.
\end{align*}
Here the coefficient of $|\alpha - c|^{-1}|\rho|^k$ also tends to one as $\alpha/c \to 1$.
Thus, the true error in our modification of the Chebyshev iteration performs similarly to that of the classical Chebyshev iteration in this limit when the system is ill-conditioned.

\bibliographystyle{plain}
\bibliography{refs}

\begin{thebibliography}{10}

\bibitem{akhiezer}
N.~I. Akhiezer.
\newblock {\em {Elements of the Theory of Elliptic Functions}}, volume~79 of {\em Translations of Mathematical Monographs}.
\newblock American Mathematical Society, 1990.

\bibitem{code_repo}
C.~Ballew and T.~Trogdon.
\newblock \url{https://github.com/cade-b/AkhiezerIteration}, 2023.

\bibitem{part1}
C.~Ballew and T.~Trogdon.
\newblock {A Riemann–Hilbert approach to computing the inverse spectral map for measures supported on disjoint intervals}.
\newblock {\em Studies in Applied Mathematics}, 152(1):31--72, 2024.

\bibitem{Bilman2022}
D.~Bilman, P.~Nabelek, and T.~Trogdon.
\newblock {Computation of large-genus solutions of the Korteweg–de Vries equation}.
\newblock {\em Physica D: Nonlinear Phenomena}, 449:133715, July 2023.

\bibitem{Bogaert2014a}
I.~Bogaert.
\newblock {Iteration-Free Computation of Gauss--Legendre Quadrature Nodes and Weights}.
\newblock {\em SIAM Journal on Scientific Computing}, 36(3):A1008--A1026, January 2014.

\bibitem{SaadMatrixFunction}
J.~Chen, M.~Anitescu, and Y.~Saad.
\newblock {Computing {$f(A)b$} via Least Squares Polynomial Approximations}.
\newblock {\em SIAM Journal on Scientific Computing}, 33(1):195--222, 2011.

\bibitem{Chen2007}
Y.~Chen and A.~R. Its.
\newblock {A Riemann{\textendash}Hilbert approach to the Akhiezer polynomials}.
\newblock {\em Philosophical Transactions of the Royal Society A: Mathematical, Physical and Engineering Sciences}, 366(1867):973--1003, 2007.

\bibitem{Chen2002}
Y.~Chen and N.~Lawrence.
\newblock {A generalization of the Chebyshev polynomials}.
\newblock {\em J. Phys. A: Math. Gen}, 35:4651--4699, 2002.

\bibitem{de_boor_extremal_1982}
C.~de~Boor and J.~R. Rice.
\newblock Extremal {Polynomials} with {Application} to {Richardson} {Iteration} for {Indefinite} {Linear} {Systems}.
\newblock {\em SIAM Journal on Scientific and Statistical Computing}, 3(1):47--57, March 1982.

\bibitem{deift_2000}
P.~Deift.
\newblock {\em Orthogonal polynomials and random matrices: A Riemann-Hilbert approach}.
\newblock American Math. Soc., 2000.

\bibitem{Ding2022}
X.~Ding and T.~Trogdon.
\newblock {A Riemann–Hilbert Approach to the Perturbation Theory for Orthogonal Polynomials: Applications to Numerical Linear Algebra and Random Matrix Theory}.
\newblock {\em International Mathematics Research Notices}, 2023.

\bibitem{NIST:DLMF}
{\it NIST Digital Library of Mathematical Functions}.
\newblock \url{https://dlmf.nist.gov/}, Release 1.1.11 of 2023-09-15.
\newblock F.~W.~J. Olver, A.~B. {Olde Daalhuis}, D.~W. Lozier, B.~I. Schneider, R.~F. Boisvert, C.~W. Clark, B.~R. Miller, B.~V. Saunders, H.~S. Cohl, and M.~A. McClain, eds.

\bibitem{cheby_iter1950}
D.~A. Flanders and G.~Shortley.
\newblock {Numerical Determination of Fundamental Modes}.
\newblock {\em Journal of Applied Physics}, 21(12):1326--1332, 1950.

\bibitem{Fokas1992}
A.~S. Fokas, A.~R. Its, and A.~V. Kitaev.
\newblock {The isomonodromy approach to matric models in 2D quantum gravity}.
\newblock {\em Communications in Mathematical Physics}, 147(2):395--430, July 1992.

\bibitem{gautschi}
W.~Gautschi.
\newblock {\em {Orthogonal Polynomials: Computation and Approximation}}.
\newblock Oxford University Press, 2004.

\bibitem{Gragg1984}
W.~B. Gragg and W.~J. Harrod.
\newblock {The numerically stable reconstruction of Jacobi matrices from spectral data}.
\newblock {\em Numerische Mathematik}, 44(3):317--335, 1984.

\bibitem{gutknecht_chebyshev_2002}
M.~H. Gutknecht and S.~Röllin.
\newblock The {Chebyshev} iteration revisited.
\newblock {\em Parallel Computing}, 28(2):263--283, February 2002.

\bibitem{Hale2012}
N.~Hale and A.~Townsend.
\newblock {Fast and Accurate Computation of Gauss-Legendre and Gauss-Jacobi Quadrature Nodes and Weights}.
\newblock {\em SIAM J. Sci. Comput.}, 35(2):A652--A674, 2013.

\bibitem{Huybrechs2016}
D.~Huybrechs and P.~Opsomer.
\newblock {Construction and implementation of asymptotic expansions for Laguerre-type orthogonal polynomials}.
\newblock {\em IMA Journal of Numerical Analysis}, 38(3):1085--1118, December 2018.

\bibitem{Manteuffel1977}
T.~A. Manteuffel.
\newblock {The Tchebychev iteration for nonsymmetric linear systems}.
\newblock {\em Numerische Mathematik}, 28(3):307--327, 1977.

\bibitem{manteuffel_adaptive_1978}
T.~A. Manteuffel.
\newblock Adaptive procedure for estimating parameters for the nonsymmetric {Tchebychev} iteration.
\newblock {\em Numerische Mathematik}, 31(2):183--208, June 1978.

\bibitem{Mason2002}
J.~C. Mason and D.~C. Handscomb.
\newblock {\em {Chebyshev Polynomials}}.
\newblock Chapman and Hall/{CRC}, 2002.

\bibitem{Nakatsukasa2018}
Y.~Nakatsukasa, O.~S{\`{e}}te, and L.~N. Trefethen.
\newblock {The AAA Algorithm for Rational Approximation}.
\newblock {\em SIAM Journal on Scientific Computing}, 40(3):A1494--A1522, January 2018.

\bibitem{olver_slevinsky_townsend_2020}
S.~Olver, R.~M. Slevinsky, and A.~Townsend.
\newblock {Fast algorithms using orthogonal polynomials}.
\newblock {\em Acta Numerica}, 29:573–699, 2020.

\bibitem{TrogdonSORMT}
S.~Olver and T.~Trogdon.
\newblock {Numerical solution of Riemann--Hilbert Problems: Random matrix theory and orthogonal polynomials}.
\newblock {\em Constructive Approximation}, 39(1):101--149, December 2013.

\bibitem{rivlin_81}
T.~J. Rivlin.
\newblock {\em An introduction to the approximation of functions}.
\newblock Dover books on advanced mathematics. Dover, unabridged and corr. republication of the 1969 ed edition, 1981.

\bibitem{Saad}
Y.~Saad.
\newblock {Iterative Solution of Indefinite Symmetric Linear Systems by Methods Using Orthogonal Polynomials over Two Disjoint Intervals}.
\newblock {\em SIAM Journal on Numerical Analysis}, 20(4):784--811, 1983.

\bibitem{Szego1939}
G.~Szegő.
\newblock {\em {Orthogonal Polynomials}}, volume~23 of {\em Colloquium Publications}.
\newblock American Mathematical Society, Providence, Rhode Island, December 1939.

\bibitem{Townsend2014}
A.~Townsend, T.~Trogdon, and S.~Olver.
\newblock {Fast computation of Gauss quadrature nodes and weights on the whole real line}.
\newblock {\em IMA Journal of Numerical Analysis}, 36(1):337--358, October 2014.

\bibitem{trapezoid}
L.~N. Trefethen and J.~A.~C. Weideman.
\newblock {The Exponentially Convergent Trapezoidal Rule}.
\newblock {\em SIAM Review}, 56(3):385--458, 2014.

\bibitem{Trogdon2013b}
T.~Trogdon and S.~Olver.
\newblock {A Riemann-Hilbert approach to Jacobi operators and Gaussian quadrature}.
\newblock {\em IMA Journal of Numerical Analysis}, 36(1):174--196, January 2014.

\bibitem{TrogdonSOBook}
T.~Trogdon and S.~Olver.
\newblock {\em {Riemann--Hilbert Problems, Their Numerical Solution and the Computation of Nonlinear Special Functions}}.
\newblock SIAM, Philadelphia, PA, 2016.

\end{thebibliography}
\end{document}